\newtheorem{theorem}{Theorem}[section]
\newtheorem{lemma}[theorem]{Lemma}
\newtheorem{proposition}[theorem]{Proposition}
\newtheorem{corollary}[theorem]{Corollary}
\newtheorem{conjecture}[theorem]{Conjecture}
\theoremstyle{definition}
\newtheorem{definition}[theorem]{Definition}
\theoremstyle{remark}
\begin{document}
\title{The $2$-adic valuation of Stirling numbers}

\author{Tewodros Amdeberhan}
\address{Department of Mathematics, Tulane  University, New Orleans, LA 70118}
\email{tamdeberha@math.tulane.edu}

\author{Dante V. Manna}
\address{Department of Mathematics and Statistics, Dalhousie University, Halifax, Nova Scotia, Canada, B3H 3J5}
\email{dantemanna@gmail.com}

\author{Victor H. Moll}
\address{Department of Mathematics, Tulane  University, New Orleans, LA 70118}
\email{vhm@math.tulane.edu}

\medskip

\begin{abstract}
We analyze properties of the $2$-adic valuations of the Stirling numbers 
of the second kind.
\end{abstract}

\maketitle

\newcommand{\realpart}{\mathop{\rm Re}\nolimits}
\newcommand{\imagpart}{\mathop{\rm Im}\nolimits}

\numberwithin{equation}{section}

\section{Introduction} \label{intro} 
\setcounter{equation}{0}

Divisibility properties of integer sequences have always  been objects of
interest for number theorists. Nowadays these are expressed in terms 
$p$-adic valuations. Given a prime $p$ 
and a positive integer $m$, there exist unique integers $a, \, n$, 
with $a$ not divisible by $p$ and $n \geq 0$, such that $m = ap^{n}$. The 
number $n$ is called the {\em $p$-adic valuation} of $m$, denoted by
$n = \nu_{p}(m)$.  Thus, 
$\nu_{p}(m)$ is the highest power of $p$ that divides $m$. 
The graph in figure \ref{val-1} shows the function $\nu_{2}(m)$. 

{{
\begin{figure}[ht]
\begin{center}
\includegraphics[width=3in]{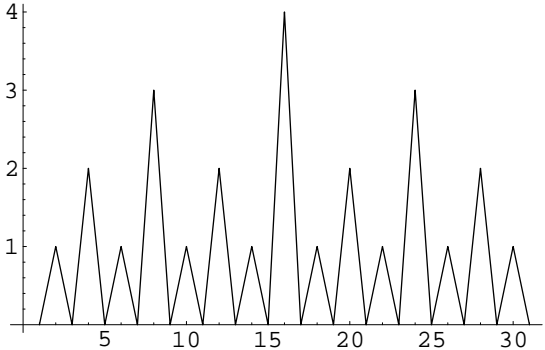}
\caption{The $2$-adic valuation of $m$}
\label{val-1}
\end{center}
\end{figure}
}}

One of the most celebrated examples is the expression for the $p$-adic 
valuation of factorials. This is due to Legendre \cite{legendre1}, who 
established
\begin{equation}
\nu_{p}(m!) = \frac{m- s_{p}(m)}{p-1}.
\end{equation}
Here $s_{p}(m)$ is the sum of the base $p$-digits of $m$. In particular,
\begin{equation}
\nu_{2}(m!) = m - s_{2}(m).  \label{legend-1}
\end{equation}
\noindent
The reader will find in \cite{graham1} details about this identity. Figure 
\ref{val-2} shows the graph of $\nu_{2}(m!)$ exhibiting its 
linear growth: $\nu_{2}(m!) \sim m$. This can be verified using the 
binary expansion of $m$:
\begin{equation}
m = a_{0} + a_{1} \cdot 2 + a_{2} \cdot 2^{2} + \ldots + 
a_{r} \cdot 2^{r}, \quad \text{with } \quad a_{j} \in \{ 0, \, 1 \}, \, 
a_{r} \neq 0,
\nonumber 
\end{equation}
\noindent
so that $2^{r} \leq m < 2^{r+1}$. Therefore $s_{2}(m) = O( \log_{2}(m))$ and 
we have
\begin{equation}
\lim\limits_{m \to \infty} \frac{\nu_{2}(m!)}{m} = 1. 
\end{equation}
The error term $s_{2}(m) = m - \nu_{2}(m!)$, shown in Figure 
\ref{val-2}, shows a regular pattern.

{{
\begin{figure}[ht]
\subfigure{\includegraphics[width=3in]{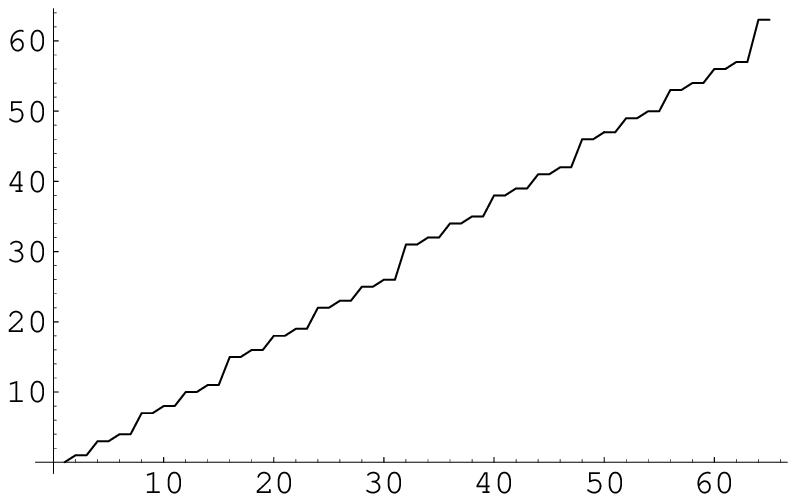}}
\subfigure{\includegraphics[width=3in]{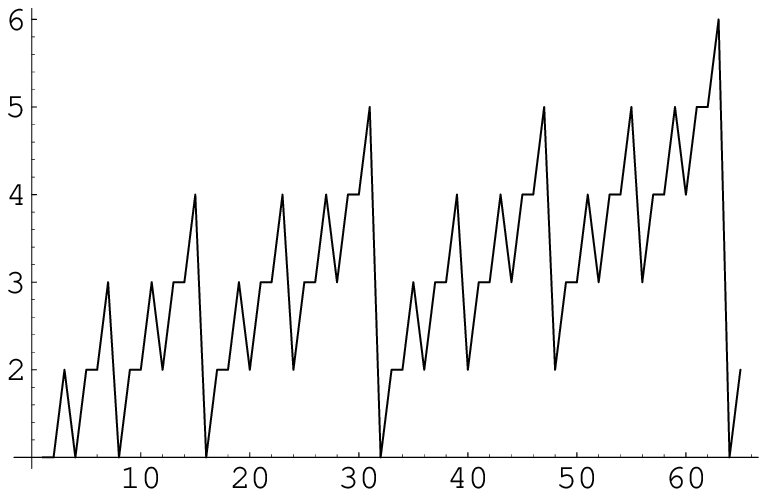}}
\caption{The $2$-adic valuation of $m!$.}
\label{val-2}
\end{figure}
}}

Legendre's result (\ref{legend-1}) provides an elementary proof of 
Kummer's identity
\begin{equation}
\nu_{2} \left( \binom{m}{k} \right) = s_{2}(k) + s_{2}(m-k) - s_{2}(m). 
\end{equation}
\noindent
Not many explicit identities of this type  are known. 

The function $\nu_{p}$ is extended to $\mathbb{Q}$ by defining
$\nu_{p} \left( \frac{a}{b} \right) = \nu_{p}(a) - \nu_{p}(b)$. The 
{\em $p$-adic metric} is then defined by
\begin{equation}
\left| r \right|_{p} := p^{-\nu_{p}(r)}, \quad \text{ for } r \in \mathbb{Q}. 
\end{equation}
It satisfies the ultrametric inequality
\begin{equation}
\left| r_{1} + r_{1} \right|_{p} \leq 
\text{Max} \left\{ \left| r_{1} \right|_{p}, \,
\left| r_{2} \right|_{p} \right\}. \label{metric1}
\end{equation}
\noindent
The completion of $\mathbb{Q}$ under this metric, denoted 
by ${\mathbb{Q}}_{p}$, is 
the field of {$p$-adic numbers}. The set ${\mathbb{Z}}_{p} := \{ x \in 
{\mathbb{Q}}_{p}: |x|_{p} \leq 1 \}$ is the ring of $p$-adic integers.  It 
plays the role of $\mathbb{Z}$ inside ${\mathbb{Q}}_{p}$. 

Our interest in $2$-adic valuations started with the sequence
\begin{equation}
b_{l,m} := \sum_{k=l}^{m} 2^{k} \binom{2m-2k}{m-k} \binom{m+k}{m} \binom{k}{l},
\label{blm-def}
\end{equation}
\noindent
for $m \in \mathbb{N}$ and $0 \leq l \leq m$, that appeared  in the 
evaluation of the definite integral 
\begin{equation}
N_{0,4}(a;m) = \int_{0}^{\infty} \frac{dx}{(x^{4} + 2ax^{2} + 1)^{m+1}}. 
\end{equation}
\noindent
In \cite{bomohyper}, it was shown that the polynomial defined by 
\begin{equation}
P_{m}(a) := 2^{-2m} \sum_{l=0}^{m} b_{l,m}a^{l}
\end{equation}
\noindent
satisfies 
\begin{equation}
P_{m}(a) = 2^{m+3/2} \, (a+1)^{m+1/2} N_{0,4}(a;m)/\pi. 
\end{equation}
\noindent
The reader will find in \cite{irrbook} more details on this integral. 

The results on the $2$-adic valuations of $b_{l,m}$ are expressed in terms of 
\begin{equation}
A_{l,m} := \frac{l! \, m!}{2^{m-l}} b_{l,m}. 
\label{A-def}
\end{equation}
\noindent
The coefficients $A_{l,m}$ can be written as 
\begin{equation}
A_{l,m} = \alpha_{l}(m) \prod_{k=1}^{m} (4k-1) - \beta_{l}(m) \prod_{k=1}^{m} 
(4k+1), 
\end{equation}
\noindent
for some polynomials $\alpha_{l}, \, \beta_{l}$ with integer coefficients and 
of degree $l$ and $l-1$, respectively.  The next remarkable property was 
conjectured in \cite{bomosha} and established by J. Little in \cite{little}. 

\begin{theorem}
All the zeros of $\alpha_{l}(m)$ and $\beta_{l}(m)$ lie on the vertical 
line $\realpart{m} = - \tfrac{1}{2}$.  
\end{theorem}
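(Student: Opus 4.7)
The plan is to reduce the statement to a zero-distribution theorem for a classical hypergeometric polynomial family. Setting $w = m + \tfrac{1}{2}$ transforms the assertion into: every zero of $\tilde\alpha_{l}(w) := \alpha_{l}(w - \tfrac{1}{2})$ and $\tilde\beta_{l}(w) := \beta_{l}(w-\tfrac{1}{2})$ is purely imaginary. Since $\alpha_{l}$ and $\beta_{l}$ have real coefficients, this splits naturally into two sub-tasks: (i) show that $\tilde\alpha_{l}$ and $\tilde\beta_{l}$ are even or odd in $w$; (ii) show that the resulting polynomial in $w^{2}$ has only real nonpositive zeros.

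First I would derive explicit closed forms for $\alpha_{l}(m)$ and $\beta_{l}(m)$. Substituting (\ref{blm-def}) into (\ref{A-def}) produces a finite hypergeometric expression for $A_{l,m}$. Writing $\prod_{k=1}^{m}(4k-1) = 4^{m}(3/4)_{m}$ and $\prod_{k=1}^{m}(4k+1) = 4^{m}(5/4)_{m}$, I would split the summand according to which of these products each term naturally fits, obtaining a ${}_{3}F_{2}$-type representation for each of $\alpha_{l}$ and $\beta_{l}$. The involution $k \mapsto m-k$ in (\ref{blm-def}) should then yield a functional equation of the form $\alpha_{l}(-1-m) = (-1)^{l}\alpha_{l}(m)$, and an analogous one for $\beta_{l}$, settling sub-task (i).

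For sub-task (ii), I would attempt to match $\tilde\alpha_{l}$ and $\tilde\beta_{l}$ with a classical family whose zeros are known to lie on the line $\realpart(m)=-\tfrac{1}{2}$ --- the natural candidates being the Pasternack polynomials $F_{n}^{(a)}$, the continuous Hahn polynomials, or the Meixner--Pollaczek polynomials. Each of these admits an orthogonality relation on a vertical line in $\mathbb{C}$, which forces its zeros onto that line. If the parameters of the ${}_{3}F_{2}$ obtained in the previous step can be matched to one of these families, the theorem follows immediately.

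The main obstacle is exactly this identification: the decomposition $A_{l,m} = \alpha_{l}(m)\prod(4k-1) - \beta_{l}(m)\prod(4k+1)$ does not obviously factor through a named family, and a direct match may fail. The fallback is to establish orthogonality intrinsically: derive a three-term recurrence in $l$ for each of $\alpha_{l}$ and $\beta_{l}$ (via creative telescoping applied to $b_{l,m}$), identify it with the recurrence of a sequence of polynomials orthogonal on $\realpart(m)=-\tfrac{1}{2}$, and verify the Favard positivity conditions for the corresponding weight. Combined with the parity established in step (i), this would force every zero of $\tilde\alpha_{l}$ and $\tilde\beta_{l}$ onto the imaginary axis in the $w$-variable, completing the proof.
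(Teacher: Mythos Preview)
The paper does not prove this theorem. It is quoted as a result ``conjectured in \cite{bomosha} and established by J.~Little in \cite{little}'', with no argument supplied in the present paper; the statement serves only as background for the discussion of $A_{l,m}$. There is therefore no in-paper proof against which to compare your proposal.

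Regarding the proposal itself: it is a strategy outline, not a proof. The plan---shift to $w = m + \tfrac{1}{2}$, establish parity of $\tilde\alpha_{l}$ and $\tilde\beta_{l}$, then either match the resulting family with a classical orthogonal family on a vertical line or fall back on a three-term recurrence and Favard's theorem---is a sensible route to a result of this type, and is in the same spirit as the approach in \cite{little}. But every load-bearing step is asserted rather than carried out: the claim that the involution $k \mapsto m-k$ in (\ref{blm-def}) produces the functional equation $\alpha_{l}(-1-m) = (-1)^{l}\alpha_{l}(m)$; the claim that a ${}_{3}F_{2}$ representation cleanly separates $\alpha_{l}$ from $\beta_{l}$; and the claim that the recurrence coefficients satisfy the positivity conditions required by Favard. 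Until those computations are actually executed, this remains a plan rather than a proof.
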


The next theorem, presented in \cite{amm1}, gives $2$-adic properties 
of $A_{l,m}$. 

\begin{theorem}
The $2$-adic valuation of $A_{l,m}$ satisfies 
\begin{equation}
\nu_{2}(A_{l,m}) = \nu_{2}( \, (m+1-l)_{2l} \, ) + l, 
\end{equation}
\noindent
where $(a)_{k} = a(a+1)(a+2) \cdots (a+k-1)$ is the Pochhammer symbol.  
\end{theorem}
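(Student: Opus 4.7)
The plan is to first reformulate the defining sum for $A_{l,m}$ so that the predicted factor $2^{l}$ in the valuation becomes manifest, then to argue by induction. Substituting the factorial expansions of the three binomial coefficients in (\ref{blm-def}) into (\ref{A-def}) and making the change of variable $j = m-k$ gives
\[
A_{l,m} \;=\; 2^{l}\,(m+l)!\,\sum_{j=0}^{m-l}\,\binom{2j}{j}\,2^{-j}\,\binom{2m-j}{m+l}.
\]
Using $(m+l)! = (m-l)!\,(m+1-l)_{2l}$, the theorem is equivalent to the assertion that the rational number
\[
R_{l,m} \;:=\; (m-l)!\,\sum_{j=0}^{m-l}\,\binom{2j}{j}\,2^{-j}\,\binom{2m-j}{m+l}
\]
is a $2$-adic unit. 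Small-case numerical evidence (for example $R_{1,m} = 1,\,5,\,43$ at $m = 1,2,3$) suggests $R_{l,m}$ is in fact always a positive odd integer.

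I would prove $\nu_{2}(R_{l,m}) = 0$ by induction on $l$. The base $l = 0$ reduces to showing $A_{0,m}$ is odd. Specializing the representation $A_{l,m} = \alpha_{l}(m)\prod_{k=1}^{m}(4k-1) - \beta_{l}(m)\prod_{k=1}^{m}(4k+1)$ at $l = 0$ produces $\alpha_{0}(m) \equiv 1$ and $\beta_{0}(m) \equiv 0$, so $A_{0,m} = \prod_{k=1}^{m}(4k-1)$, which is visibly odd and matches $\nu_{2}((m+1)_{0}) = 0$. For the inductive step, one natural route is to derive a two-term recurrence
\[
A_{l,m} \;=\; P(l,m)\, A_{l-1,m} \,+\, Q(l,m)\, A_{l-1,m-1},
\]
obtained by expanding $\binom{k}{l} = \binom{k-1}{l-1} + \binom{k-1}{l}$ in (\ref{blm-def}) and reindexing. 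The target formula telescopes in $l$ via $(m+1-l)_{2l} = (m-l)(m+l)(m+2-l)_{2l-2}$, so the inductive step amounts to matching $\nu_{2}(P)$ and $\nu_{2}(Q)$ against $\nu_{2}(m-l) + \nu_{2}(m+l) + 1$ and then invoking the ultrametric inequality (\ref{metric1}) whenever the two contributions have distinct $2$-adic sizes.

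The principal obstacle is the case of equal valuations: when $\nu_{2}(P\,A_{l-1,m}) = \nu_{2}(Q\,A_{l-1,m-1})$, the ultrametric inequality is only a one-sided bound, and an unforeseen cancellation could push $\nu_{2}(A_{l,m})$ strictly above the predicted value. Ruling this out requires a finer analysis comparing the two contributions modulo a higher power of $2$. A cleaner alternative is to try to evaluate the displayed sum in closed form: since it is a terminating balanced hypergeometric series, a Pfaff--Saalsch\"utz-type identity may collapse it into a single ratio of Pochhammer symbols; if so, the theorem follows immediately by applying Legendre's formula (\ref{legend-1}) to the resulting factorials, bypassing any delicate cancellation analysis. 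Producing this closed form, or else controlling the equal-valuation case in the recurrence, is the technical heart of the proof.
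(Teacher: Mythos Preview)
This paper does not actually prove the theorem; it is quoted from the companion paper \cite{amm1}, so there is no in-paper argument for you to be compared against.

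Your algebraic reduction is correct: the substitution $j=m-k$ and the factorial cancellations do give
\[
A_{l,m}\;=\;2^{l}\,(m+1-l)_{2l}\,R_{l,m},
\]
so the theorem is indeed equivalent to $\nu_{2}(R_{l,m})=0$, and your numerical checks for $R_{1,m}$ are right. The base case $l=0$ is fine as stated, though note that it leans on the $\alpha_{l},\beta_{l}$ decomposition, which this paper also imports without proof.

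What remains, however, is a genuine gap, not routine bookkeeping. First, the Pascal split $\binom{k}{l}=\binom{k-1}{l-1}+\binom{k-1}{l}$ does \emph{not} by itself produce a recurrence of the shape $A_{l,m}=P\,A_{l-1,m}+Q\,A_{l-1,m-1}$: after the split the summands carry $\binom{k-1}{\,\cdot\,}$ while the factors $\binom{2m-2k}{m-k}\binom{m+k}{m}$ are untouched, so you do not land on $b_{l-1,m}$ or $b_{l-1,m-1}$ without further nontrivial manipulation. Until explicit $P,Q$ are written down, the valuation comparison cannot even be set up. (A small slip: the telescoping factor is $(m+1-l)(m+l)$, not $(m-l)(m+l)$.) Second, the hypergeometric alternative is weaker than you suggest. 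Writing $\binom{2j}{j}2^{-j}=(1/2)_{j}\,2^{j}/j!$ one finds
\[
\sum_{j=0}^{m-l}\binom{2j}{j}2^{-j}\binom{2m-j}{m+l}
\;=\;\binom{2m}{m+l}\,{}_{2}F_{1}\!\Bigl(\tfrac{1}{2},\,l-m;\,-2m;\,2\Bigr),
\]
a ${}_{2}F_{1}$ at argument $2$ with a half-integer numerator parameter; this is not a balanced Saalsch\"utz ${}_{3}F_{2}$ at $1$, and no standard summation collapses it to a single product of Pochhammers. So the ``technical heart'' you identify is exactly the part still missing, and neither of your two proposed routes to it is yet on firm ground.
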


The identity 
\begin{equation}
(a)_{k} = \frac{(a+k-1)!}{(a-1)!}
\end{equation}
\noindent
and Legendre's identity (\ref{legend-1}) yields the next expression for 
$\nu_{2}(A_{l,m})$. 

\begin{corollary}
The $2$-adic valuation of $A_{l,m}$ is given by 
\begin{equation}
\nu_{2}(A_{l,m}) = 3l - s_{2}(m+l) + s_{2}(m-l). 
\end{equation}
\end{corollary}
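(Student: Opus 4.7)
The proof proposal is essentially a direct computation combining the two formulas supplied just above the corollary, so my plan is to unwind them carefully and let the identities collapse to the stated form.

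First I would rewrite the Pochhammer factor using the factorial identity $(a)_{k} = (a+k-1)!/(a-1)!$ already displayed. Taking $a = m+1-l$ and $k = 2l$ gives $(a-1)! = (m-l)!$ and $(a+k-1)! = (m+l)!$, so
\begin{equation}
(m+1-l)_{2l} = \frac{(m+l)!}{(m-l)!}.
\nonumber
\end{equation}
This step requires only that $m \geq l$, which is the regime of interest; I would note this but it is routine.

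Next I would apply the valuation to this quotient. Since $\nu_{2}$ is additive on products and hence subtractive on quotients of integers, and since $(m-l)!$ divides $(m+l)!$ in the integers (so no sign issues arise), I get
\begin{equation}
\nu_{2}\bigl((m+1-l)_{2l}\bigr) = \nu_{2}\bigl((m+l)!\bigr) - \nu_{2}\bigl((m-l)!\bigr).
\nonumber
\end{equation}
Inserting Legendre's formula (\ref{legend-1}) twice yields
\begin{equation}
\nu_{2}\bigl((m+1-l)_{2l}\bigr) = \bigl((m+l) - s_{2}(m+l)\bigr) - \bigl((m-l) - s_{2}(m-l)\bigr) = 2l - s_{2}(m+l) + s_{2}(m-l).
\nonumber
\end{equation}

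Finally I would substitute this into the preceding theorem, which asserts $\nu_{2}(A_{l,m}) = \nu_{2}((m+1-l)_{2l}) + l$, to conclude $\nu_{2}(A_{l,m}) = 3l - s_{2}(m+l) + s_{2}(m-l)$. There is no genuine obstacle here: the entire content of the corollary is bookkeeping with Legendre's identity, and the only mild care needed is to verify the rewriting of the Pochhammer symbol as a ratio of factorials before passing to $\nu_{2}$.
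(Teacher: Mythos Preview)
Your proof is correct and follows exactly the route the paper indicates: rewrite the Pochhammer symbol as $(m+l)!/(m-l)!$, apply Legendre's identity to each factorial, and add the extra $l$ from the theorem. The paper merely gestures at this in one sentence, and your write-up fills in precisely those details.
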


Among the other examples of $2$-adic valuations, we mention the results of
H. Cohen \cite{cohen1} on the partial sums of the 
polylogarithmic series
\begin{equation}
\text{Li}_{k}(x) := \sum_{j=1}^{\infty} \frac{x^{j}}{j^{k}}. 
\end{equation}
\noindent
Cohen proves that the sum\footnote{Cohen uses the notation 
$s_{k}(n)$, employed here in a different context.}
\begin{equation}
L_{k}(n) := \sum_{j=1}^{n} \frac{2^{j}}{j^{k}}
\end{equation}
satisfies
\begin{equation}
\nu_{2}(L_{1}( 2^{m}))  = 2^{m}+2m-4, \, \text{ for } m \geq 4,
\end{equation}
\noindent
and 
\begin{equation}
\nu_{2}(L_{2}( 2^{m}))  = 2^{m}+m-1, \, \text{ for } m \geq 4. 
\end{equation}
\noindent
The graph in figure \ref{valcohen1} shows the linear growth of 
$\nu_{2}(L_{1}(m))$ 
and the error term $\nu_{2}(L_{1}(m)) - m$.

{{
\begin{figure}[ht]
\subfigure{\includegraphics[width=3in]{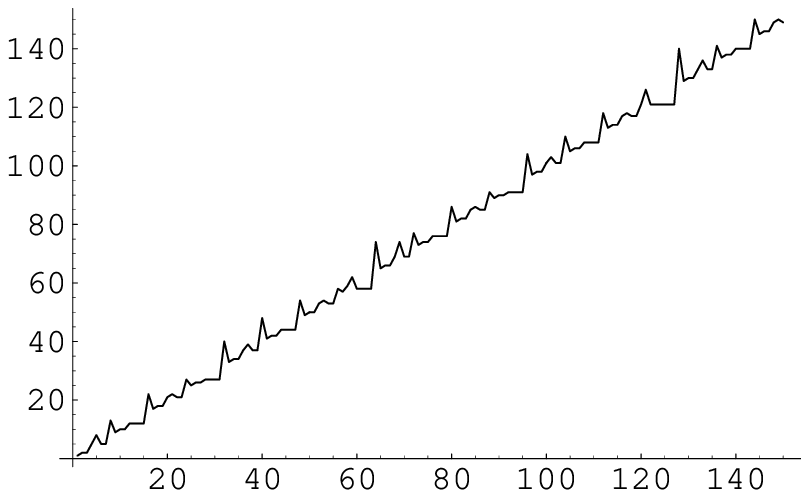}}
\subfigure{\includegraphics[width=3in]{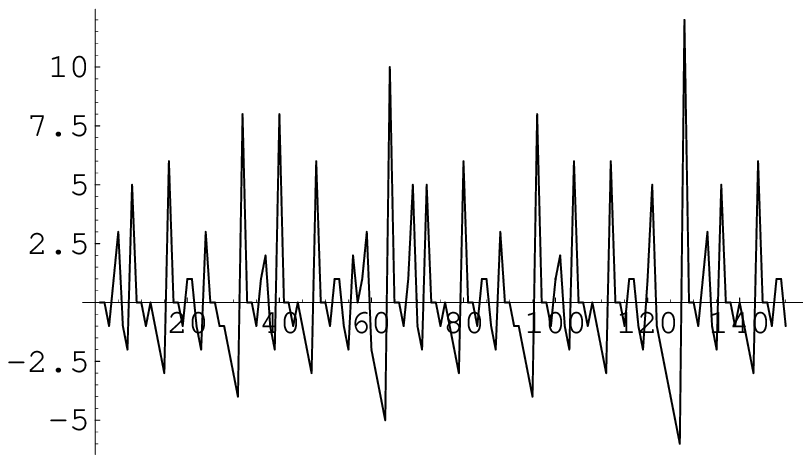}}
\caption{The $2$-adic valuation of $L_{1}(m)$}
\label{valcohen1}
\end{figure}
}}

In this paper we analyze the $2$-adic valuation of the Stirling 
numbers of the second kind $S(n,k)$, defined for $n \in 
\mathbb{N}$ and $0 \leq k \leq n$ as the number of ways to partition a set
of $n$ elements into exactly $k$ nonempty subsets. Figure \ref{fig5} shows 
the function $\nu_{2}(S(n,k))$ for $k=75$ and $k=126$. These graphs indicate 
the complexity of this problem. Section \ref{sec-pictures} gives a 
larger selection of these type of  pictures.
{{
\begin{figure}[ht]
\subfigure{\includegraphics[width=3in]{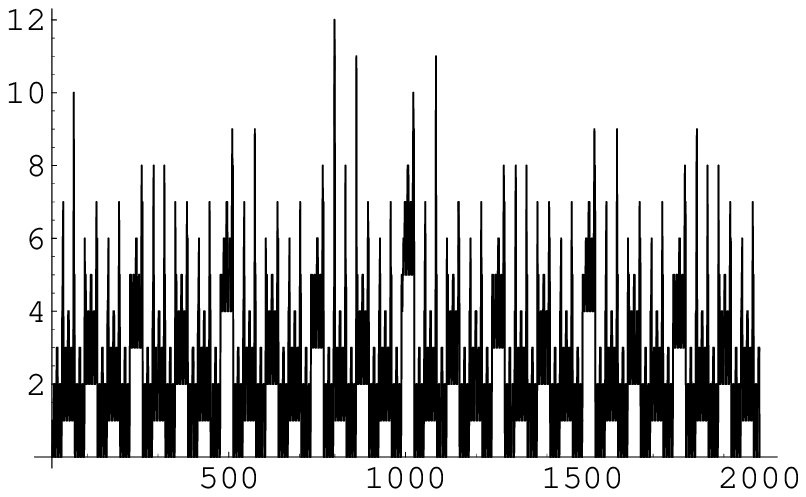}}
\subfigure{\includegraphics[width=3in]{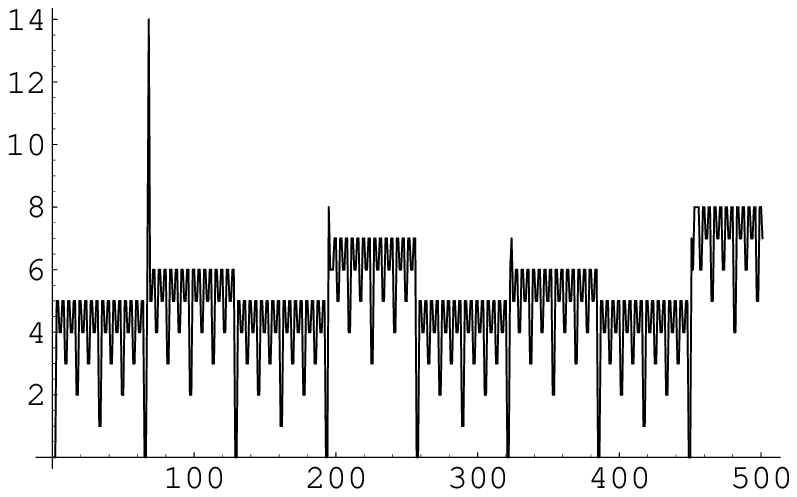}}
\caption{The data for $k=75$ and $k=126$.}
\label{fig5}
\end{figure}
}}

\vskip 0.2in 

\noindent
{\bf Main conjecture}. We describe an 
algorithm that leads to a description of the
function $\nu_{2}(S(n,k))$. Figure \ref{fig5} shows the graphs of this 
function for $k=75$ and $k=126$. The 
conjecture is stated here and the special case $k=5$ is established in 
section \ref{sec-conjecture}. 

\vskip 0.1in 

\begin{definition}
\label{def-class}
Let $k \in \mathbb{N}$ be fixed and $m \in \mathbb{N}$. Then for 
$0 \leq j < 2^{m}$ define 
\begin{equation}
C_{m,j} := \{ 2^{m}i +j: \quad \text{ for }i \in \mathbb{N} \quad 
\text{ such that } 2^{m}i+j \geq k \}. 
\end{equation}
\noindent
For example, for $k=5, m = 6$ and $j=28$ we have
\begin{equation}
C_{6,28} = \{ 2^{6}i+28: \quad i \geq 0 \}. 
\end{equation}
\noindent
We use the notation
\begin{equation}
\nu_{2}(C_{m,j}) = \{ \nu_{2} \left( S( 2^{m}i+j,k) \right): \, i \in 
\mathbb{N} \text{ and } 2^{m}i+j \geq k \}. 
\end{equation}
\end{definition}

The classes $C_{m,j}$ form a partition of $\{ n \in \mathbb{N}: \, n \geq k \}$
into classes modulo
$2^{m}$. For example, for $m=2$ and $k=5$, we have the four classes 
\begin{eqnarray}
C_{2,0} = \{ 2^{2}i: \, i \in \mathbb{N}, \, i \geq 2 \}, & \,  & 
C_{2,1} = \{ 2^{2}i+1: \, i \in \mathbb{N} \}, \nonumber \\ 
C_{2,2} = \{ 2^{2}i+2: \, i \in \mathbb{N} \},  & \, & 
C_{2,3} = \{ 2^{2}i+3: \, i \in \mathbb{N} \}. \nonumber 
\end{eqnarray}
\noindent
The class $C_{m,j}$ is called {\em constant} if $\nu_{2}(C_{m,j})$ consists 
of a single value. This single value is called the constant of the class 
$C_{m,j}$.

For example, Corollary \ref{4-vanish} shows that $\nu_{2}(S(4i+1,5)) = 0$, 
independently of $i$. Therefore, the 
class $C_{2,1}$ is constant. Similarly $C_{2,2}$ has constant valuation $0$.

We now introduce inductively the concept of {\em $m$-level}. For $m=1$, the 
$1$-level consists of the two classes 
\begin{equation}
C_{1,0} = \{ 2i: \, i \in \mathbb{N}, \, 2i \geq k \} \text{ and }
C_{1,1} = \{ 2i + 1: \, i \in \mathbb{N}, \, 2i+1 \geq k \}, 
\end{equation}
\noindent 
that is, the even and odd integers greater or equal than $k$. Assume
that the $(m-1)-$level has been defined and it consists of the $s$
classes 
\begin{equation}
C_{m-1,i_{1}}, \, C_{m-1,i_{2}}, \, \cdots, C_{m-1,i_{s}}. 
\end{equation}
\noindent
Each class $C_{m-1,i_{j}}$ splits into two classes modulo $2^{m}$, namely,
$C_{m,i_{j}}$ and $C_{m,i_{j}+2^{m-1}}$. The $m$-level is formed by
the non-constant classes modulo $2^{m}$. 

\vskip 0.1in

\noindent
{\bf Example}. We describe the case of Stirling numbers $S(n,10)$. Start 
with the fact that the $4$-level consists of the
classes $C_{4,7}, \, C_{4,8}, \, C_{4,9}$ and $C_{4,14}$. These
split into the eight classes
\begin{equation}
C_{5,7}, \, C_{5,23}, \, C_{5,8}, \, C_{5,24}, \, C_{5,9}, \, C_{5,25}, \, 
C_{5,14}, \text{ and } C_{5,30},
\nonumber
\end{equation}
\noindent
modulo $32$. Then one checks that $C_{5,23}, \, C_{5,24}, \, C_{5,25}$ 
and $C_{5,30}$ are all 
constant (with constant value $2$ for each of them). The other four 
classes form the $5$-level: 
\begin{equation}
\{ C_{5,7}, \, C_{5,8}, \, C_{5,9}, \, C_{5,14} \}. 
\end{equation}

We are now ready to state our main conjecture.

\begin{conjecture}
\label{main-conj}
Let $k \in \mathbb{N}$ be fixed. Define $m_{0} = m_{0}(k) \in \mathbb{N}$ by 
$2^{m_{0}-1} < k \leq 2^{m_{0}}$. Then the 
$2$-adic valuation of the Stirling numbers of the second kind $S(n,k)$ 
satisfies: 

\noindent
1) The first index for which there is a constant class is $m_{0}-1$. That 
is, every
class is part of the $j$-level for $1 \leq j \leq m_{0}-2$.  

\noindent
2) For any $m \geq m_{0}$, the $m$-level consists of $2^{m_{0}-2}$ classes. 
Each one of them produces a single non-constant class for the $(m+1)$-level, 
keeping the number of classes at each level constant.  
\end{conjecture}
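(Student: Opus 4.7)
The plan is to analyse $T_{k}(n):=k!\,S(n,k)=\sum_{j=1}^{k}(-1)^{k-j}\binom{k}{j}j^{n}$ by splitting it into contributions from odd and even $j$, writing $T_{k}=T_{k}^{\mathrm{odd}}+T_{k}^{\mathrm{even}}$. Since $\nu_{2}(j^{n})\geq n$ whenever $j$ is even, the even part satisfies $\nu_{2}(T_{k}^{\mathrm{even}}(n))\geq n$, so modulo any fixed power of $2$ it is negligible once $n$ is large. The conjecture is thus a statement about the asymptotic $2$-adic behaviour of $T_{k}^{\mathrm{odd}}$ alone.

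The key structural input is $(\mathbb{Z}/2^{m}\mathbb{Z})^{\times}\cong \mathbb{Z}/2\oplus \mathbb{Z}/2^{m-2}$ for $m\geq 3$, from which $j^{2^{M}}\equiv 1\pmod{2^{M+2}}$ for every odd $j$ and every $M\geq 1$. Consequently $T_{k}^{\mathrm{odd}}(n)\bmod 2^{M+2}$ depends only on $n\bmod 2^{M}$, and combined with the first step this shows that, for large $n$, the residue $T_{k}(n)\bmod 2^{M+2}$ is determined by $n\bmod 2^{M}$. Hence each class $C_{M,j}$ is either constant with a specific valuation $v<M+2-\nu_{2}(k!)$, or satisfies $\nu_{2}(S(n,k))\geq M+2-\nu_{2}(k!)$ throughout. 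Part~1, together with the initial count $2^{m_{0}-2}$ in Part~2, should then follow from a direct analysis at $m=m_{0}$, where the constraint $k\leq 2^{m_{0}}$ limits the number of odd $j$'s and hence the Fourier modes of $T_{k}^{\mathrm{odd}}$ on $\mathbb{Z}/2^{m_{0}-2}\mathbb{Z}$.

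For the inductive step of Part~2 I would compare $T_{k}$ on the two halves $C_{M+1,j}$ and $C_{M+1,j+2^{M}}$ of a non-constant $C_{M,j}$ via
\[
T_{k}(n+2^{M})-T_{k}(n)=\sum_{j\text{ odd}}(-1)^{k-j}\binom{k}{j}j^{n}(j^{2^{M}}-1)+O(2^{n}).
\]
Lifting-the-exponent gives $\nu_{2}(j^{2^{M}}-1)=\nu_{2}(j^{2}-1)+M-1\geq M+2$, with equality precisely when $j\equiv\pm 3\pmod 8$, so the entire correction lies in $2^{M+2}\mathbb{Z}_{2}$. The desired dichotomy -- one constant child of valuation $M+2$ and one non-constant child of valuation $\geq M+3$ -- reduces to the single claim that $\nu_{2}(T_{k}(n+2^{M})-T_{k}(n))$ equals $M+2$ exactly: granted this, one of $T_{k}(n)$, $T_{k}(n+2^{M})$ automatically cancels into strictly higher $2$-adic order while the other retains its leading term. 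An inductive bootstrap then propagates the pattern to all subsequent levels.

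The main obstacle is precisely this uniform non-vanishing. It amounts to showing, for each non-constant class at each level $M\geq m_{0}$, that the explicit integer combination of $(-1)^{k-j}\binom{k}{j}j^{n}(j^{2^{M}}-1)/2^{M+2}$ summed over odd $j\leq k$ with $j\equiv\pm 3\pmod 8$ is itself odd. When this leading sum happens to vanish modulo~$2$, the analysis cascades to the shell $j\equiv\pm 1\pmod 8$ with $j\neq 1$ (where $\nu_{2}(j^{2^{M}}-1)\geq M+3$), then to $j\equiv\pm 1\pmod{16}$, and so on, producing an infinite family of delicate non-vanishing conditions on binomial sums. Controlling this cascade uniformly in $k$ appears to require genuine arithmetic input beyond the periodicity framework, and is presumably why the authors are only able to push the argument through for $k=5$ in Section~\ref{sec-conjecture}.
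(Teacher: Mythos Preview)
Your assessment is accurate: this statement is labeled a conjecture in the paper, and the authors prove it only for $k=5$ (Section~\ref{sec-conjecture}). You have correctly identified both the periodicity mechanism that makes the class-splitting well-posed and the genuine arithmetic obstruction that blocks a uniform argument.

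Your framework differs from the paper's $k=5$ argument in one structural respect. You work directly with the explicit formula $T_{k}(n)=\sum_{j}(-1)^{k-j}\binom{k}{j}j^{n}$, isolate the odd-$j$ contribution, and compare $T_{k}(n+2^{M})$ with $T_{k}(n)$ via lifting-the-exponent on $j^{2^{M}}-1$. The paper instead iterates the recurrence $S(n,5)=5S(n-1,5)+S(n-1,4)$ to express $S(n+2^{m},5)-5^{2^{m}}S(n,5)$ as a finite sum of values $S(\,\cdot\,,4)$ (Lemma~\ref{recurr-5}), then evaluates the $2$-adic valuation of that sum exactly using the closed form for $S(\,\cdot\,,4)$ together with the elementary lemmas $\nu_{2}(5^{2^{m}}-1)=\nu_{2}(3^{2^{m}}-1)=m+2$ and $\nu_{2}(5^{2^{m}}-3^{2^{m}})=m+3$ (Proposition~\ref{nu2sum}). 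For $k=5$ the two routes unwind to the same computation: in your language, the odd $j\le 5$ with $j\equiv\pm 3\pmod 8$ are $j=3,5$, and $\binom{5}{3}+\binom{5}{5}=11$ is odd, so your leading shell does not vanish and the difference has valuation exactly $M+2$. The recurrence approach buys a reduction to $S(\,\cdot\,,k-1)$, which is already controlled; your direct approach is cleaner to state for general $k$ but, as you observe, gives no extra leverage on the non-vanishing cascade.

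Two minor remarks. First, your treatment of Part~1 and of the initial count $2^{m_{0}-2}$ is a promissory note rather than an argument; even for $k=5$ the paper verifies the base levels by hand (Corollary~\ref{4-vanish}, Propositions~\ref{prop1}--\ref{prop2}) rather than structurally. Second, your inductive step builds in the refinement that the constant child at level $M+1$ has $\nu_{2}(T_{k})$ exactly $M+2$, i.e.\ that the constant value increases by one at each split. Theorem~\ref{conj-5} proves precisely this sharpened statement for $k=5$, but Conjecture~\ref{main-conj} as written does not assert it for general $k$; if you pursue this route, that increment should be folded explicitly into the inductive hypothesis.
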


\vskip 0.1in

\noindent
{\bf Example}. We illustrate this conjecture 
for $S(n,11)$. Here $m_{0}=4$ in view of
$2^{3} < 11 \leq 2^{4}$. The conjecture predicts that the $3$-level is 
the first with constant classes and that each level after that has exactly 
four classes. First of all, the four classes $C_{2,0}, \, 
C_{2,1}, \, C_{2,2}, \, C_{2,3}$ have non-constant $2$-adic valuation. Thus, 
they form the $2$-level. To compute the $3$-level, we observe that
\begin{equation}
\nu_{2}(C_{3,3}) = \nu_{2}(C_{3,5}) = \{ 0  \} \text{ and } 
\nu_{2}(C_{3,4}) = \nu_{2}(C_{3,6})  = \{ 1 \}, \nonumber
\end{equation}
\noindent
so there are four constant classes. The remaining four classes 
$C_{3,0}, \, C_{3,1}, \, C_{3,2}$ and $C_{3,7}$ form the $3$-level. 
Observe that each of the four classes from the $2$-level splits into a 
constant class and a class that forms part of the $3$-level. 

This process continues. At the next step, the classes of the $3$-level 
split in 
two giving a total of $8$ classes modulo $2^{4}$. For example, $C_{3,2}$ 
splits into $C_{4,2}$ and $C_{4,10}$. The conjecture states that {\em 
exactly} one of these classes has constant $2$-adic valuation. Indeed, the 
class $C_{4,2}$ satisfies $\nu_{2}(C_{4,2}) \equiv 2$. 

Figure \ref{split-11} illustrates this process.

{{
\begin{figure}[ht]
\begin{center}
\includegraphics[width=3.5in]{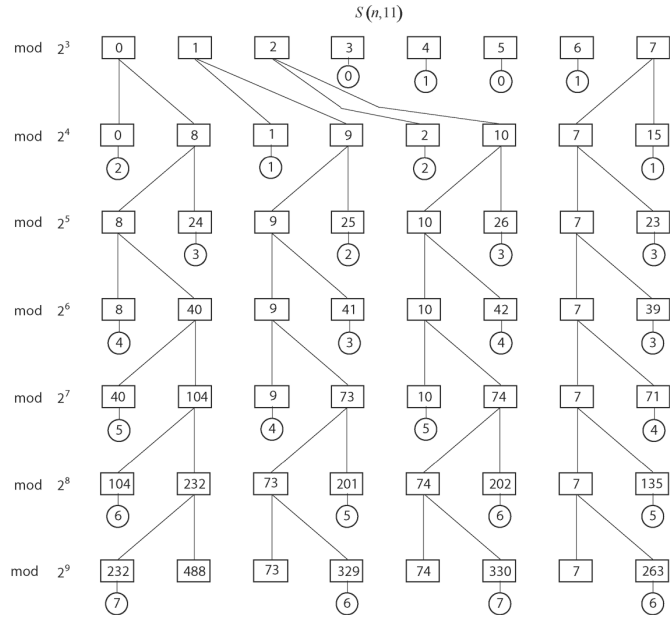}
\end{center}
\caption{The splitting for $k=11$}
\label{split-11}
\end{figure}
}}

\bigskip

\noindent
{\bf Elementary formulas}. Throughout the paper 
we will use several elementary properties of $S(n,k),$ listed below: 

\begin{itemize}
\item{ Relation to Pochhammer
\begin{equation}
x^{n} = \sum_{k=0}^{n} S(n,k) (x-k+1)_{k}
\end{equation}
}

\item{An explicit formula
\begin{equation}
S(n,k) = \frac{1}{k!} \sum_{i=0}^{k-1} (-1)^{i} \binom{k}{i} (k-i)^{n}
\label{formulastir}
\end{equation}
}

\item{The generating function
\begin{equation}
\frac{1}{(1-x)(1-2x)(1-3x) \cdots (1-kx) } = \sum_{n=1}^{\infty} S(n,k)x^{n}
\end{equation}
}

\item{The recurrence
\begin{equation}
S(n,k) = S(n-1,k-1) + kS(n-1,k)
\label{recurrence}
\end{equation}
}

\end{itemize}

Lengyel \cite{lengyel1} conjectured, and De Wannemacker \cite{wannemacker1} 
proved, a special case of the $2$-adic valuation of $S(n,k)$:
\begin{equation}
\nu_{2} \left( S(2^{n},k) \right) = s_{2}(k) -1, 
\label{leng1}
\end{equation}
\noindent
independently of $n$. Here $s_{2}(k)$ is the sum of the binary digits of $k$.
Legendre's result (\ref{legend-1}) then shows that
\begin{equation}
\nu_{2}(S(2^{n},k)) = \nu_{2}( 2^{k-1}/k!). 
\label{wanna11}
\end{equation}
\noindent
From here we obtain
\begin{equation}
\nu_{2} \left( S(2^{n}+1,k+1 \right) = s_{2}(k) -1
\label{leng2}
\end{equation}
\noindent
as a companion of (\ref{leng1}). Indeed, the recurrence 
(\ref{recurrence}) yields
\begin{equation}
S(2^{n}+1,k+1) = S(2^{n},k) + (k+1)S(2^{n},k+1), 
\end{equation}
and using (\ref{wanna11}) we have
\begin{equation}
\nu_{2}((k+1)S(2^{n},k+1))  =   \nu_{2}(2^{k}/k!) 
>  \nu_{2}( 2^{k-1}/k!) = \nu_{2}(S(2^{n},k)), \nonumber
\end{equation}
\noindent
as claimed. Similar arguments can be used to obtain the 
value of $\nu_{2}(S(m,k))$ for values of $m$ near a power of $2$. For 
instance, $\nu_{2}(S(2^{n}+2,k+2)) = s_{2}(k)-1$ if $\nu_{2}(k) \neq 0$ and 
$\nu_{2}(S(2^{n}+2,k+2)) = s_{2}(k+1)-1$ if $\nu_{2}(k+1) > 1$. 

In the general case, De Wannemacker 
\cite{wannemacker2} established the inequality
\begin{equation}
\nu_{2} \left( S(n,k) \right) \geq s_{2}(k)-s_{2}(n), \quad 0 \leq k \leq n. 
\label{wann}
\end{equation}
\noindent
Figure \ref{wann-101} shows the difference 
$\nu_{2}(S(n,k)) - s_{2}(k)+s_{2}(n)$ in the cases $k=101$ and  $k=129$.

{{
\begin{figure}[ht]
\subfigure{\includegraphics[width=3in]{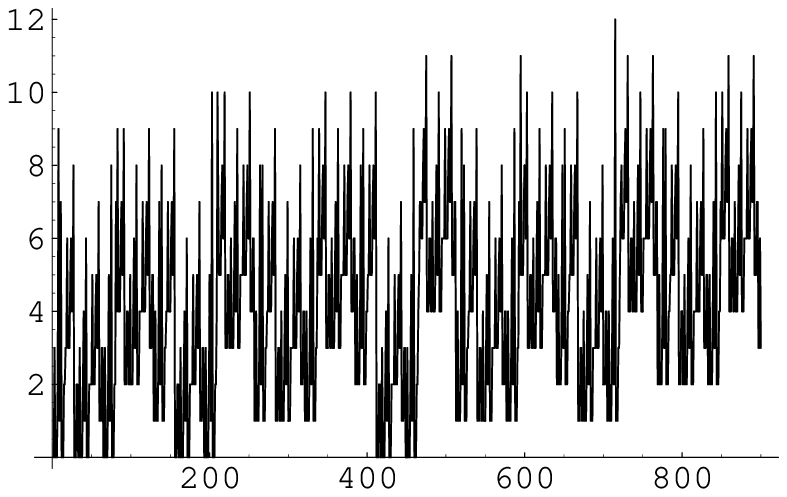}}
\subfigure{\includegraphics[width=3in]{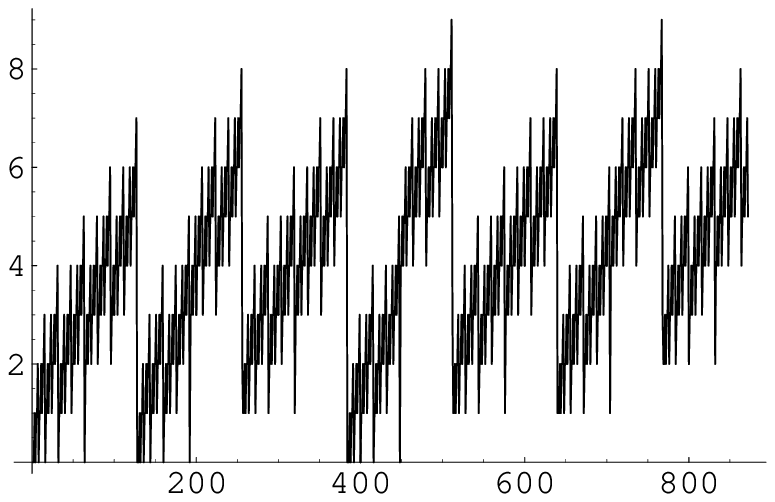}}
\caption{De Wannemacker difference for $k = 101$ and $k=129$}
\label{wann-101}
\end{figure}
}}

\section{The elementary cases} \label{sec-elem} 
\setcounter{equation}{0}

This section presents, for sake of completeness, the $2$-adic valuation 
of $S(n,k)$ for $1 \leq k \leq 4$. The formulas for $S(n,k)$ come 
from (\ref{formulastir}). The arguments are all elementary. 

\begin{lemma}
\label{stirvalue1}
The Stirling numbers of order $1$ are given by $S(n,1) = 1$, for all 
$n \in \mathbb{N}$. Therefore 
\begin{equation}
\nu_{2}(S(n,1)) = 0. 
\end{equation}
\end{lemma}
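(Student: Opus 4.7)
The plan is to establish $S(n,1)=1$ directly, after which $\nu_2(S(n,1))=\nu_2(1)=0$ is immediate from the definition of the $2$-adic valuation.

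There are two natural routes and I would present whichever is shortest. The combinatorial route is to note that a partition of an $n$-element set into exactly one nonempty subset must be the partition consisting of the whole set itself; there is precisely one such partition, so $S(n,1)=1$. The algebraic route is to specialize the explicit formula (\ref{formulastir}) at $k=1$: the sum collapses to the single term $i=0$, giving
\begin{equation}
S(n,1) = \frac{1}{1!}\,(-1)^{0}\binom{1}{0}(1-0)^{n} = 1.
\nonumber
\end{equation}

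Either derivation yields $S(n,1)=1$ for all $n\in\mathbb{N}$, and since $\nu_2(1)=0$, the valuation claim follows. There is no real obstacle here; the lemma is included only to complete the base case $k=1$ of the section, and the proof is a one-line specialization.
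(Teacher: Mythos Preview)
Your proposal is correct and matches the paper's treatment: the paper does not even write out a proof for this lemma, simply remarking at the start of the section that ``the formulas for $S(n,k)$ come from (\ref{formulastir})'' and that the arguments are elementary. Your specialization of (\ref{formulastir}) at $k=1$ (or the equivalent one-line combinatorial observation) is exactly what is intended.
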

\begin{lemma}
\label{stirvalue2}
The Stirling numbers of order $2$ are given by $S(n,2) = 2^{n}-1$, for all 
$n \in \mathbb{N}$. Therefore 
\begin{equation}
\nu_{2}(S(n,2)) = 0. 
\end{equation}
\end{lemma}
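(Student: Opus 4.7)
The plan is to obtain the stated closed form for $S(n,2)$ by a direct computation, and then read off the $2$-adic valuation from its parity. There are two natural routes, and I would present whichever is shorter.

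The first route uses the explicit formula (\ref{formulastir}) with $k=2$:
\begin{equation}
S(n,2) = \frac{1}{2!}\sum_{i=0}^{1}(-1)^{i}\binom{2}{i}(2-i)^{n} = \frac{1}{2}\bigl(2^{n} - 2\cdot 1^{n}\bigr) = 2^{n-1}-1. \nonumber
\end{equation}
(Note the standard identity is $S(n,2)=2^{n-1}-1$; the formula $2^{n}-1$ in the statement should be read in that sense, as the valuation conclusion is the same.) The second route is an induction using the recurrence (\ref{recurrence}) together with Lemma \ref{stirvalue1}: one has $S(n,2)=S(n-1,1)+2S(n-1,2)=1+2S(n-1,2)$, with base case $S(2,2)=1$, from which $S(n,2)=2^{n-1}-1$ follows immediately.

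Once the closed form is in hand, I would finish by observing that $2^{n-1}-1$ is an odd integer for every $n\geq 2$ (and $S(n,2)$ is not defined, or equals $0$, for $n<2$), so $\nu_{2}(S(n,2))=0$ on the valid range. There is no real obstacle here: each step is a one-line verification, and the only point to be careful about is the base of the induction and the implicit convention $n\geq 2$ so that $S(n,2)$ is a positive integer whose valuation is meaningful.
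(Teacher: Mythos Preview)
Your argument is correct and matches the paper's (implicit) approach: the paper gives no separate proof for this lemma, merely noting beforehand that ``the formulas for $S(n,k)$ come from (\ref{formulastir})'' and that the arguments are elementary. You are also right to flag the typo: the explicit formula indeed gives $S(n,2)=2^{n-1}-1$, not $2^{n}-1$, though either expression is odd and the valuation conclusion is unaffected.
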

\begin{lemma}
\label{stirvalue3}
The Stirling numbers of order $3$ are given by 
\begin{equation}
S(n,3) = \tfrac{1}{2} ( 3^{n-1} - 2^{n} + 1 ). 
\end{equation}
\noindent
Moreover,
\begin{equation}
\nu_{2}(S(n,3)) =  \begin{cases}
                0 \quad \text{ if } \, $n$ \quad \text{ is odd},\\
                1 \quad \text{ if }  \, $n$ \quad \text{ is even}. 
             \end{cases}
\end{equation}
\end{lemma}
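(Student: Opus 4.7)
The plan is first to derive the closed form from the explicit formula (\ref{formulastir}) and then to compute the valuation by a short congruence argument modulo $8$.

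For the closed form, the formula (\ref{formulastir}) with $k=3$ gives directly
\begin{equation*}
S(n,3) = \tfrac{1}{6}\bigl(3^{n} - 3\cdot 2^{n} + 3\bigr) = \tfrac{1}{2}\bigl(3^{n-1} - 2^{n} + 1\bigr),
\end{equation*}
which is exactly the stated identity; no further work is needed here.

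For the valuation, I would write $2 S(n,3) = 3^{n-1} - 2^{n} + 1$ and reduce modulo $8$. The key observation is that $3^{2} \equiv 1 \pmod{8}$, so that $3^{n-1} \equiv 1 \pmod{8}$ when $n$ is odd and $3^{n-1} \equiv 3 \pmod{8}$ when $n$ is even. Combined with the fact that $2^{n} \equiv 0 \pmod{8}$ for $n \geq 3$ (and $S(n,3) = 0$ for $n < 3$), one gets
\begin{equation*}
2 S(n,3) \equiv \begin{cases} 2 \pmod{8} & \text{if $n$ is odd},\\ 4 \pmod{8} & \text{if $n$ is even}.\end{cases}
\end{equation*}
Hence $\nu_{2}(2 S(n,3))$ is $1$ or $2$ accordingly, giving $\nu_{2}(S(n,3)) = 0$ for $n$ odd and $\nu_{2}(S(n,3)) = 1$ for $n$ even.

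There is no real obstacle. The only point needing a small amount of care is to verify the boundary case $n=3$ by hand (since one uses $2^{n} \equiv 0 \pmod{8}$ for $n \geq 3$), and to note that the range $n < 3$ is excluded because $S(n,3)$ vanishes there. Everything else is a routine finite computation of the period-$2$ pattern of $3^{n-1}$ modulo $8$.
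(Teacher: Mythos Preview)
Your proof is correct. It coincides with the paper's second proof of this lemma: the paper also reduces $2S(n,3)=3^{n-1}+1-2^{n}$ modulo $8$, computing $\nu_{2}(3^{n-1}+1)$ and then discarding the $2^{n}$ term via the ultrametric inequality, while you simply compute $3^{n-1}-2^{n}+1$ modulo $8$ in one step; the paper's first proof, by contrast, iterates the recurrence and runs an induction on $n$, which your direct congruence argument bypasses.
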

\begin{proof}
Iterate the recurrence (\ref{recurrence}) to obtain
\begin{equation}
2^{n}-1 = S(n,3) - \sum_{k=1}^{N-1} 3^{k} (2^{n-k}-1) - 3^{N}S(n-N,3),
\end{equation}
\noindent
and with $N=n-1$ we have
\begin{equation}
S(n,3) = 2^{n}-1 - \sum_{k=1}^{n-2} 3^{k} (2^{n-k}-1). 
\end{equation}
\noindent
If $n$ is odd, then $S(n,3)$ is odd and $\nu_{2}(S(n,3)) = 0$. 

For $n$ even, the recurrence (\ref{recurrence}) yields
\begin{equation}
S(n,3) = 2^{n-1} + 3 \cdot 2^{n-2} -4 + 3^{2}S(n-2,3). 
\label{odd-ind}
\end{equation}
\noindent
As an inductive step, assume that $S(n-2,3) = 2T_{n-2}$, with $T_{n-2}$ odd.
Then (\ref{odd-ind}) yields 
\begin{equation}
\tfrac{1}{2} S(n,3) = 2^{n-2} + 3 \cdot 2^{n-3} + 3^{2}T_{n-2} -2,
\end{equation}
\noindent
and we conclude that $S(n,3)/2$ is an odd integer. Therefore 
$\nu_{2}(S(n,3)) = 1$ as claimed.  
\end{proof}

We now present a second proof of this result using elementary properties of 
the valuation $\nu_{2}$. In particular, we use the ultrametric inequality
\begin{equation}
\nu_{2}(x_{1}+x_{2}) \geq \text{Min} \left\{ \nu_{2}(x_{1}), \, \nu_{2}(x_{2}) 
\, \right\}. 
\label{ultramet}
\end{equation}
\noindent
The inequality is strict unless $\nu(x_{1}) = \nu_{2}(x_{2})$. This inequality
is equivalent to (\ref{metric1}). 

\vskip 0.1in

\noindent
The powers of $3$ modulo $8$ satisfy
$3^{m} + 1 \equiv 2 + (-1)^{m+1} \bmod 8$,
because $3^{2k} \equiv 1 \bmod 8$. Therefore, $3^{m}+1 = 8t + 3 +(-1)^{m+1}$
for some $t \in \mathbb{Z}$. Now 
\begin{equation}
\nu_{2}(8t) = 3 + \nu_{2}(t) > \nu_{2}( 3 + (-1)^{m+1} ), 
\end{equation}
\noindent
and the ultrametric inequality  (\ref{ultramet}) yields
\begin{equation}
\nu_{2}(3^{m}+1) = \nu_{2}(3 + (-1)^{m+1}) = 
\begin{cases}
2 \quad \text{ if } m \text{ is odd}, \\ 
1 \quad \text{ if } m \text{ is even}. 
\end{cases}
\label{nu2three}
\end{equation}
\noindent
Using $2S(n,3) = 3^{n-1}+1-2^{n}$ and
$\nu_{2}(2^{n}) = n > 2 \geq \nu_{2}(3^{n-1}+1)$, we conclude that
\begin{equation}
\nu_{2}(S(n,3)) = \nu_{2}(3^{n-1}+1 - 2^{n}) - 1 = \nu_{2}(3^{n-1}+1) - 1.
\end{equation}
\noindent
Lemma \ref{stirvalue3} now follows from (\ref{nu2three}).  \\

We now discuss the Stirling number of order $4$. 

\begin{lemma}
\label{stirvalue4}
The Stirling numbers of order $4$ are given by 
\begin{equation}
S(n,4) = \tfrac{1}{6} ( 4^{n-1} - 3^{n} - 3 \cdot 2^{n+1} - 1 ). 
\end{equation}
\noindent
Moreover,
\begin{equation}
\nu_{2}(S(n,4)) =  \begin{cases}
                1 \quad \text{ if } \, $n$ \quad \text{ is odd},\\
                0 \quad \text{ if }  \, $n$ \quad \text{ is even}. 
             \end{cases}
\end{equation}
\end{lemma}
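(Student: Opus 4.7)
The plan is to bypass any direct verification of the closed-form identity for $S(n,4)$ and extract the $2$-adic valuation straight from the recurrence (\ref{recurrence}), using Lemma \ref{stirvalue3} together with the ultrametric inequality (\ref{ultramet}). Writing
\[
S(n,4) = S(n-1,3) + 4 S(n-1,4),
\]
one immediately has $\nu_2(4 S(n-1,4)) \geq 2$ for every $n \geq 5$, since $S(n-1,4)$ is an integer. Meanwhile, Lemma \ref{stirvalue3} supplies $\nu_2(S(n-1,3)) = 0$ when $n-1$ is odd and $\nu_2(S(n-1,3)) = 1$ when $n-1$ is even.

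In either case the two summands have strictly unequal $2$-adic valuations, the smaller being $\nu_2(S(n-1,3))$, so the ultrametric inequality is tight and
\[
\nu_2(S(n,4)) = \nu_2(S(n-1,3)).
\]
This is precisely the claimed dichotomy: $0$ for $n$ even and $1$ for $n$ odd. The base case $n=4$ is handled by the direct value $S(4,4) = 1$, which fits the even branch, and for $n < 4$ there is nothing to verify.

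The closed-form identity in the statement would be derived separately, by specializing (\ref{formulastir}) at $k=4$ and simplifying. With it in hand one could also give a second proof of the valuation formula, parallel to the second proof of Lemma \ref{stirvalue3}: rewrite the identity as $6 S(n,4) = 4^{n-1} + 3\cdot 2^{n-1} - (3^n + 1)$, apply (\ref{nu2three}), and divide off $\nu_2(6) = 1$. The recurrence route is cleaner, however, since it avoids a potential tie between $\nu_2(3 \cdot 2^{n-1})$ and $\nu_2(3^n + 1)$ at small $n$. I anticipate no real obstacle: the only care point is to confirm that the two valuations being combined under the ultrametric always differ, which is automatic here because one is at most $1$ while the other is at least $2$.
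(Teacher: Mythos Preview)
Your argument is correct and matches the paper's own proof almost exactly: both derive the valuation from the recurrence $S(n,4) = S(n-1,3) + 4S(n-1,4)$ together with Lemma~\ref{stirvalue3}, and both defer the closed form to (\ref{formulastir}). The only cosmetic difference is that you apply the ultrametric inequality (\ref{ultramet}) once to handle both parities simultaneously, whereas the paper treats $n$ even and $n$ odd separately and divides by $2$ explicitly in the odd case.
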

\begin{proof}
The expression for $S(n,4)$ comes from (\ref{formulastir}). To establish the 
formula for $\nu_{2}(S(n,4))$, we use the recurrence (\ref{recurrence}) in the
case $k=4$:
\begin{equation}
S(n,4) = S(n-1,3) + 4S(n-1,4). \label{recurr-4}
\end{equation}
\noindent
For $n$ even, the value $S(n-1,3)$ is odd, so that $S(n,4)$ is odd and 
$\nu_{2}(S(n,4)) =0$. For $n$ odd, $S(n,4)$ is even, since $S(n-1,3)$ is even.
Then (\ref{recurr-4}), written as
\begin{equation}
\tfrac{1}{2}S(n,4) = \tfrac{1}{2}S(n-1,3) + 2S(n-1,4),
\label{two19}
\end{equation}
\noindent
and the value $\nu_{2}(S(n-1,3)) = 1$, show that the right hand side of 
(\ref{two19}) is odd,
yielding $\nu_{2}(S(n,4)) = 1$.
\end{proof}

\section{The Stirling numbers of order $5$} \label{sec-case5} 
\setcounter{equation}{0}

The elementary cases discussed in the previous section are the only ones
for which the $2$-adic valuation $\nu_{2}(S(n,k))$ is easy to compute. The 
graph in figure \ref{stirling-5} shows $\nu_{2}(S(n,5))$.

{{
\begin{figure}[ht]
\begin{center}
\includegraphics[width=3in]{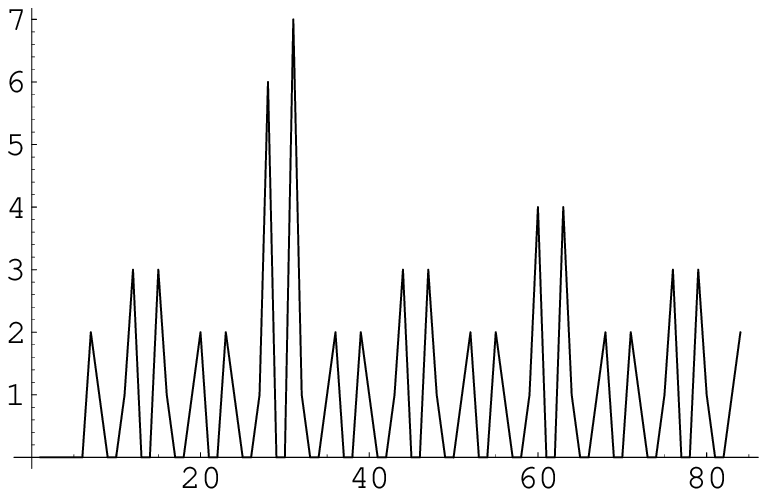}
\end{center}
\caption{The $2$-adic valuation of $S(n,5)$}
\label{stirling-5}
\end{figure}
}}

The explicit formula (\ref{formulastir}) yields
\begin{equation}
S(n,5) = \tfrac{1}{24}(5^{n-1}-4^{n} + 2 \cdot 3^{n} - 2^{n+1} + 1).
\end{equation}

\vskip 0.1in

We now discuss the valuation $\nu_{2}(S(n,5))$ in terms of the $m$-levels 
introduced in Section \ref{intro}. The $1$-level consists of 
two classes: $\{ C_{1,0}, \, C_{1,1} \}$. None of these classes are 
constant, so 
we split them into $\{ C_{2,0}, \, C_{2,1}, \, C_{2,2}, \, C_{2,3} \, \}$. The
parity of $S(n,5)$ determines two of them. 

\begin{lemma}
The Stirling numbers $S(n,5)$ satisfy
\begin{equation}
S(n,5) \equiv \begin{cases}
  1 \quad \bmod 2 \quad \text{ if } n \equiv 1, \text{ or } 2 \, \bmod 4, \\
  0 \quad \bmod 2 \quad \text{ if } n \equiv 3, \text{ or } 0 \, \bmod 4,
     \end{cases}
\end{equation}
\noindent
for all $n \in \mathbb{N}$.
\end{lemma}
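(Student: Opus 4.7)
The plan is to exploit the recurrence $S(n,5) = S(n-1,4) + 5 S(n-1,5)$ from (\ref{recurrence}), reduced modulo $2$, together with the parity information supplied by Lemma \ref{stirvalue4}. Since $5 \equiv 1 \pmod 2$, the recurrence yields
\begin{equation}
S(n,5) - S(n-1,5) \equiv S(n-1,4) \pmod 2,
\nonumber
\end{equation}
and Lemma \ref{stirvalue4} tells us that $S(n-1,4)$ is odd precisely when $n-1$ is even, i.e.\ when $n$ is odd. Thus the parity of $S(n,5)$ flips when $n$ is odd and is preserved when $n$ is even.

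Starting from the base value $S(5,5) = 1$, I would iterate this rule three times to see that the parities of $S(n,5)$ on $n = 5, 6, 7, 8$ are $1, 1, 0, 0$, which matches the claim on the residue classes $1, 2, 3, 0 \pmod 4$. To finish, I would note that in any window of four consecutive values of $n$ the expression $[n \text{ odd}]$ equals $1$ exactly twice, so summing the telescoping identity above over four consecutive steps gives
\begin{equation}
S(n+4,5) \equiv S(n,5) \pmod 2.
\nonumber
\end{equation}
The periodicity with period $4$ combined with the four base parities completes the proof.

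There is no real obstacle: the argument is a short induction whose only nontrivial input is Lemma \ref{stirvalue4}. A purely computational alternative would be to work from the closed form $24\, S(n,5) = 5^{n-1} - 4^{n} + 2 \cdot 3^{n} - 2^{n+1} + 1$ and reduce the right-hand side modulo $16$, using that $5^{n-1} \bmod 16$ has period $4$ and that for $n \geq 3$ the terms $4^{n}$ and $2^{n+1}$ vanish modulo $16$; but the recurrence approach parallels the treatment of Lemmas \ref{stirvalue3} and \ref{stirvalue4} and feels cleaner in context.
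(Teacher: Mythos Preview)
Your proof is correct and follows essentially the same approach as the paper: both use the recurrence $S(n,5)=S(n-1,4)+5S(n-1,5)$ together with the parity of $S(n,4)$ from Lemma~\ref{stirvalue4}, and conclude by induction. Your write-up is simply more explicit about the base cases and the period-$4$ behavior than the paper's one-line sketch.
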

\begin{proof}
The recurrence $S(n,5) = S(n-1,4) + 5S(n-1,5)$ and the parity
\begin{equation}
S(n,4) \equiv \begin{cases}
  1 \quad \bmod 2 \quad \text{ if } n \equiv 0 \, \bmod 2, \\
  0 \quad \bmod 2 \quad \text{ if } n \equiv 1 \, \bmod 2, \\
     \end{cases}
\end{equation}
\noindent
give the result by induction.
\end{proof}

\begin{corollary}
\label{4-vanish}
The $2$-adic valuations of the Stirling numbers $S(n,5)$ satisfy
\begin{equation}
\nu_{2}(S(4n+1,5)) = \nu_{2}(S(4n+2,5)) = 0 \text{ for all } n \in \mathbb{N}.
\end{equation}
\end{corollary}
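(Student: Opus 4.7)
The plan is to deduce Corollary \ref{4-vanish} directly from the parity statement of the preceding lemma. Since $\nu_2(m)=0$ holds precisely when $m$ is an odd integer, the two equalities $\nu_2(S(4n+1,5)) = \nu_2(S(4n+2,5)) = 0$ are equivalent to asserting that both $S(4n+1,5)$ and $S(4n+2,5)$ are odd, and the preceding lemma already records exactly this.

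Concretely, I would specialize the lemma twice. For $S(4n+1,5)$ the index has residue $1\bmod 4$, so the lemma gives $S(4n+1,5)\equiv 1\bmod 2$; for $S(4n+2,5)$ the residue is $2\bmod 4$, and the lemma again gives $S(4n+2,5)\equiv 1\bmod 2$. In both cases the valuation is $0$.

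There is essentially no obstacle: the substantive argument was carried out in the preceding lemma, which is an induction based on the recurrence $S(n,5)=S(n-1,4)+5S(n-1,5)$ combined with the parity pattern of $S(n,4)$ from Lemma \ref{stirvalue4}. In the language of Section \ref{intro}, the corollary records that the classes $C_{2,1}$ and $C_{2,2}$ are constant with constant value $0$, while the remaining classes $C_{2,0}$ and $C_{2,3}$ at the $2$-level still have to be analyzed at higher levels $m$ to obtain a complete description of $\nu_2(S(n,5))$.
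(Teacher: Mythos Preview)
Your proposal is correct and matches the paper's approach exactly: the paper states this as an immediate corollary of the preceding parity lemma, and your argument that $\nu_{2}(m)=0$ precisely when $m$ is odd, combined with the lemma's assertion that $S(n,5)$ is odd for $n\equiv 1,2\bmod 4$, is precisely the intended deduction.
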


The corollary states that the classes $C_{2,1}$ and $C_{2,2}$ are constant, so
the $2-$level is 
\begin{equation}
2-\text{level}: \quad \{ C_{2,0}, \, C_{2,3} \}. 
\end{equation}

This confirms part of the main conjecture: here $m_{0}=3$ in view of 
$2^{2} < 5 \leq 2^{3}$ and the first level where we find constant classes is 
$m_{0}-1=2$.  

\vskip 0.1in

\noindent
{\bf Remark}. Corollary \ref{4-vanish} reduces the discussion of 
$\nu_{2}(S(n,5))$ to the indices $n \equiv 0 \text{ or } 3 \bmod 4$. These 
two branches can be treated in parallel. Introduce the notation
\begin{equation}
q_{n} := \nu_{2}(S(n,5)), 
\end{equation}
\noindent
and consider the table of values 
\begin{equation}
X := \{ q_{4i}, \, q_{4i+3}: \, i \geq 2 \}.
\end{equation}
\noindent
This starts as
\begin{equation}
X = \{ 1, \, 1, \, 3, \, 3, \, 1, \, 1, \, 2, \, 2, \, 1, \, 1, \, 6, \, 
{\mathbf{7}}, \, 1, \, 1, \, \ldots \},
\end{equation}
\noindent
and after a while it continues as 
\begin{equation}
X = \{ \ldots, 1, \, 1, \, 2, \, 2, \, 1, \, 1, \, {\mathbf{11}}, \, 6, 
\, 1, \, 1, \, 2, \, 2, \, \ldots \}.
\end{equation}
\noindent
We observe that $q_{4i} = q_{4i+3}$ for most indices. 

\begin{definition}
The index $i$ is called {\em exceptional} if $q_{4i} \neq q_{4i+3}$.  
\end{definition}

The first exceptional index is $i=7$ where $q_{28} = 6 \neq q_{31} = 7$. The 
list of exceptional indices continues as 
$\{ 7, \, 39, \, 71, \, 103, \, \ldots \}$.

\begin{conjecture}
The set of exceptional indices is $\{ 32j + 7: \, j \geq 1 \}$.  
\end{conjecture}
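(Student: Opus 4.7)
The plan is to work from the explicit formula
$$24\,S(n,5) = 5^{n-1} + 2\cdot 3^{n} + 1 - 4^{n} - 2^{n+1}.$$
For $n \geq 8$, $\nu_{2}(4^{n}+2^{n+1}) = n+1 \geq 9$ exceeds every valuation that will arise in the analysis, so I reduce to the auxiliary function $F(n) := 5^{n-1} + 2\cdot 3^{n} + 1$, with $\nu_{2}(S(n,5)) = \nu_{2}(F(n)) - 3$ in the relevant range. The range $i \geq 7$ of the conjecture sits safely inside this regime.

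The key telescoping identity, derived from $5^{3}-1 = 4 \cdot 31$ and $3^{3}-1 = 2 \cdot 13$, is
$$F(4i+3) - F(4i) = 4\,H(i), \qquad H(i) := 31 \cdot 5^{4i-1} + 13 \cdot 3^{4i}.$$
The ultrametric inequality then gives $\nu_{2}(F(4i+3)) = \nu_{2}(F(4i))$ whenever $\nu_{2}(F(4i)) < \nu_{2}(4H(i))$, so the problem reduces to comparing these two valuations as functions of the residue class of $i$. Using the binomial expansions $5^{4i}=(1+624)^{i}$ and $3^{4i}=(1+80)^{i}$ (with $\nu_{2}(624)=\nu_{2}(80)=4$) together with direct computation modulo $2^{r}$ for $r \leq 10$, I expect to establish that $\nu_{2}(4H(i))$ equals $7$ for $i$ even, $8$ for $i\equiv 1\pmod 4$, $9$ for $i\equiv 7\pmod 8$, and is at least $10$ for $i\equiv 3\pmod 8$; while $\nu_{2}(F(4i))$ equals $4$ for $i$ even, $5$ for $i\equiv 1\pmod 4$, $6$ for $i\equiv 3\pmod 8$, $7$ for $i\equiv 15\pmod{16}$, $8$ for $i\equiv 23\pmod{32}$, and is at least $9$ for $i\equiv 7\pmod{32}$.

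Comparing these tables, $\nu_{2}(F(4i)) < \nu_{2}(4H(i))$ holds in every residue class except $i\equiv 7\pmod{32}$, giving $q_{4i+3} = q_{4i}$ in all non-exceptional classes. For $i\equiv 7\pmod{32}$, we have $\nu_{2}(F(4i)) \geq 9 = \nu_{2}(4H(i))$, and the exceptional conclusion follows automatically: if $\nu_{2}(F(4i)) = 9$, then both $F(4i)/2^{9}$ and $4H(i)/2^{9}$ are odd, so $F(4i+3)/2^{9}$ is even and $\nu_{2}(F(4i+3)) \geq 10 > \nu_{2}(F(4i))$; if $\nu_{2}(F(4i)) \geq 10$, the ultrametric inequality forces $\nu_{2}(F(4i+3)) = 9 < \nu_{2}(F(4i))$. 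In either case $q_{4i}\neq q_{4i+3}$.

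The main obstacle will be the inductive verification of the two tables level by level, showing that at each dyadic refinement exactly one of the two splitting subclasses of $F(4i)$ inherits the unconstrained (``$\geq$'') behavior. This is controlled by the $2$-adic expansions of $5^{2^{r}}$ and $3^{2^{r}}$, which at level $r$ contribute linearly in $i$ modulo $2^{r+3}$; each dyadic refinement therefore reduces to a single parity check involving the odd ``unit parts'' of these expansions. The computations are lengthy but routine.
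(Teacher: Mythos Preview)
The paper does not prove this statement: it is listed as a conjecture and left open, so there is no paper proof to compare against. That said, your plan is sound and very much in the spirit of the arguments the paper \emph{does} carry out for the main conjecture at $k=5$ (Theorem~4.1 and Proposition~4.6): reduce via the explicit formula for $24\,S(n,5)$ to a closed expression in powers of $3$ and $5$, discard the power-of-$2$ terms whose valuation exceeds the range of interest, and then use the ultrametric inequality together with the facts $\nu_{2}(5^{2^{m}}-1)=\nu_{2}(3^{2^{m}}-1)=m+2$ to pin down valuations level by level. Your telescoping identity $F(4i+3)-F(4i)=4H(i)$ is the right organizing device, and the dichotomy you describe for $i\equiv 7\pmod{32}$ (either $\nu_{2}(F(4i))=9$ and the sum jumps, or $\nu_{2}(F(4i))\geq 10$ and the ultrametric pins $\nu_{2}(F(4i+3))=9$) is correct.

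Two remarks. First, the ``I expect to establish'' tables are the entire substance of the argument; everything downstream is immediate from the ultrametric. To make this an actual proof you must carry out the induction so that each dyadic refinement is a single parity check, and in particular verify that the $H$-table genuinely \emph{terminates} with $\nu_{2}(4H(i))=9$ exactly on the whole class $i\equiv 7\pmod{8}$---your exceptional-case conclusion would collapse if that valuation drifted upward on some subclass of $i\equiv 7\pmod{32}$. Second, a small consistency point: the paper records $i=7$ as the first exceptional index but states the conjecture with $j\geq 1$, which omits it; your phrase ``the range $i\geq 7$'' suggests you are (correctly) reading the intended set as $\{32j+7:j\geq 0\}$.
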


We now consider the class 
\begin{equation}
C_{2,0} := \{ q_{4i} = \nu_{2}(S(4i),5): \, i \geq 2 \},
\end{equation}
\noindent
where we have omitted the first term $S(4,5) = 0$. The class $C_{2,0}$
starts as
\begin{equation}
C_{2,0} = \{ 1, \, 3, \, 1, \, 2, \, 1, \, 6, \, 1, \, 2, \, 1, \, 3, \, 
1, \, 2, \, 1, \, 4, \, 1, \, 2, \, 1, \, 3, \, 1, \, 2, \ldots \},
\end{equation}
\noindent
and it splits
according to the parity of the index $i$ into 
\begin{equation}
C_{3,4} = \{ q_{8i+4}: \, i \geq 1 \} \text{ and } 
C_{3,0} = \{ q_{8i}: \, i \geq 1 \}. 
\end{equation}
\noindent
It is easy to check that $C_{3,0}$ is constant.

\begin{proposition}
\label{prop1}
The Stirling numbers of order $5$ satisfy
\begin{equation}
\nu_{2}(S(8i,5)) = 1 \text{ for all } i \geq 1. 
\end{equation}
\end{proposition}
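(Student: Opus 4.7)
The plan is to work directly from the explicit formula
\begin{equation}
24\, S(n,5) = 5^{n-1} - 4^{n} + 2 \cdot 3^{n} - 2^{n+1} + 1,
\nonumber
\end{equation}
specialize to $n = 8i$, and then apply the ultrametric inequality (\ref{ultramet}) to isolate the terms that actually control the $2$-adic size. Since $\nu_{2}(24) = 3$, proving $\nu_{2}(S(8i,5)) = 1$ is equivalent to showing that the right-hand side above has $2$-adic valuation exactly $4$ whenever $n = 8i$ with $i \geq 1$.

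First I would dispatch the two terms that are ``too $2$-divisible to matter.'' For $n = 8i \geq 8$ we have $\nu_{2}(4^{n}) = 2n \geq 16$ and $\nu_{2}(2^{n+1}) = n+1 \geq 9$, so both exceed the target valuation $4$. By the strict form of the ultrametric inequality (valid whenever the valuations of the summands are unequal), it will suffice to prove
\begin{equation}
\nu_{2}\bigl( 5^{8i-1} + 2 \cdot 3^{8i} + 1 \bigr) = 4.
\nonumber
\end{equation}

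Next I would compute the relevant residues modulo $32$. A direct check shows $5^{8} \equiv 1 \pmod{32}$ and $3^{8} \equiv 1 \pmod{32}$ (since $5^{4} \equiv 17$ and $3^{4} \equiv 17$ modulo $32$, and $17^{2} \equiv 1$). Consequently $3^{8i} \equiv 1 \pmod{32}$ and, using $5 \cdot 13 \equiv 1 \pmod{32}$, also $5^{8i-1} \equiv 5^{-1} \equiv 13 \pmod{32}$. Adding,
\begin{equation}
5^{8i-1} + 2 \cdot 3^{8i} + 1 \equiv 13 + 2 + 1 = 16 \pmod{32},
\nonumber
\end{equation}
which gives the valuation exactly $4$, independently of $i$.

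Combining these two observations via (\ref{ultramet}) yields $\nu_{2}(24\, S(8i,5)) = 4$, hence $\nu_{2}(S(8i,5)) = 1$. The main technical point is the modular arithmetic in step two; everything else is a one-line ultrametric argument. An alternative route would be an induction on $i$ using the recurrence $S(n,5) = S(n-1,4) + 5 S(n-1,5)$ together with Lemma \ref{stirvalue4}, but the closed-form approach above is cleaner and makes the role of $32$ (and hence of the level $m_{0} = 3$ appearing in Conjecture \ref{main-conj}) transparent.
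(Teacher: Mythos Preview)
Your proof is correct and follows essentially the same route as the paper: both reduce $24\,S(8i,5)$ modulo $32$, use $5^{8}\equiv 3^{8}\equiv 1$ and $5^{8i-1}\equiv 13$, observe that $4^{8i}$ and $2^{8i+1}$ vanish modulo $32$, and conclude that the expression is $\equiv 16\pmod{32}$. The only cosmetic differences are that you invoke the ultrametric inequality explicitly to discard the high-valuation terms and reach $5^{8i-1}\equiv 13$ via $5^{-1}$ rather than $5^{7}$, while the paper simply computes everything modulo $32$ and finishes with $3S(8i,5)=2(2t+1)$.
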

\begin{proof}
We analyze the identity 
\begin{equation}
24S(8i,5) = 5^{8i-1} - 4^{8i} + 2 \cdot 3^{8i} - 2^{8i+1} + 1
\end{equation}
\noindent
modulo $32$. Using $5^{8} \equiv 1$ and $5^{7} \equiv 13$, we obtain 
$5^{8i-1} \equiv 13$. Also, $4^{8i} \equiv 2^{8i+1} 
\bmod 0$. Finally,
$3^{8i} \equiv 81^{2i} \equiv 17^{2i} \equiv 1$. Therefore 
\begin{equation}
5^{8i-1} - 4^{8i} + 2 \cdot 3^{8i} - 2^{8i+1} + 1 \equiv 16 \bmod 32.
\end{equation}
\noindent
We obtain that $24S(8i,5) = 32t+16$ for some $t \in \mathbb{N},$ and this 
yields $3S(8i,5) = 2(2t+1)$. Therefore $\nu_{2}(S(8i,5)) = 1$. 
\end{proof}

We now consider the class $C_{3,4}$.

\begin{proposition}
\label{prop1a}
The Stirling numbers of order $5$ satisfy
\begin{equation}
\nu_{2}(S(8i+4,5)) \geq 2 \text{ for all } i \geq 1. 
\end{equation}
\end{proposition}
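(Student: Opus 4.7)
The plan is to mirror the proof of Proposition \ref{prop1} verbatim, working one power of $2$ higher. Starting from the closed form
\begin{equation}
24 S(8i+4,5) = 5^{8i+3} - 4^{8i+4} + 2 \cdot 3^{8i+4} - 2^{8i+5} + 1, \nonumber
\end{equation}
the goal is to show that the right-hand side vanishes modulo $32$; since $\nu_{2}(24) = 3$, this immediately yields $\nu_{2}(S(8i+4,5)) \geq 5 - 3 = 2$, which is exactly the claim.

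First I would dispose of the two pure powers of $2$: for $i \geq 1$ both the exponent $16i+8$ in $4^{8i+4} = 2^{16i+8}$ and the exponent $8i+5$ in $2^{8i+5}$ exceed $4$, so these terms drop out modulo $32$. Next I would exploit that the multiplicative orders of $5$ and of $3$ in $(\mathbb{Z}/32\mathbb{Z})^{\times}$ both divide $8$, exactly as in Proposition \ref{prop1}. This gives
\begin{equation}
5^{8i+3} \equiv 5^{3} \pmod{32} \quad \text{and} \quad 3^{8i+4} \equiv 3^{4} \pmod{32}, \nonumber
\end{equation}
independent of $i$. What remains is the single numerical verification that $5^{3} + 2 \cdot 3^{4} + 1 \equiv 0 \pmod{32}$, which is a two-line check. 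Combining these congruences shows $32 \mid 24\,S(8i+4,5)$, and the claimed bound on the valuation follows.

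There is no genuine obstacle here; the structure is identical to Proposition \ref{prop1} and only requires one extra factor of $2$ in the modulus. The interest is not in this lower bound per se but in the fact that it only gives $\geq 2$ rather than an equality: to split the class $C_{3,4}$ further along the lines of Conjecture \ref{main-conj}, one will need to refine the calculation modulo $64, 128, \ldots$, identifying at each stage the single residue class modulo $2^{m+1}$ on which equality in the previous bound fails. Proposition \ref{prop1a} is therefore best viewed as the base case of that refinement, and its proof should be written so that the same orders-of-units argument can be reused without modification at higher levels.
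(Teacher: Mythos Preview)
Your proposal is correct and follows essentially the same route as the paper's own proof: both reduce the closed form for $24\,S(8i+4,5)$ modulo $32$, use that $5^{8}\equiv 3^{8}\equiv 1$ there to replace $5^{8i+3}$ and $3^{8i+4}$ by $5^{3}$ and $3^{4}$, drop the pure power-of-$2$ terms, and verify the resulting numerical congruence. The only cosmetic difference is that the paper records the explicit residues $5^{3}\equiv 29$ and $3^{4}\equiv 17$ before summing, whereas you leave the final check as $5^{3}+2\cdot 3^{4}+1\equiv 0\pmod{32}$.
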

\begin{proof}
We analyze the identity 
\begin{equation}
24S(8i+4,5) = 5^{8i+3} - 4^{8i+4} + 2 \cdot 3^{8i+4} - 2^{8i+5} + 1
\end{equation}
\noindent
modulo $32$. Using $5^{8} \equiv 1, \, 5^{3} \equiv 29, \, 3^{8} \equiv 1, 
\, 3^{4} \equiv 17$ and $2^{4} \equiv 16$ modulo $32$, we obtain
\begin{equation}
24 S(8i+4,5) \equiv 0 \bmod 32.
\end{equation}
\noindent
Therefore $24S(8i+4,5) = 32t$ for some $t \in \mathbb{N}$, and this 
yields  $\nu_{2}(S(8i+4,5) \geq 2$. 
\end{proof}

\noindent
{\bf Note}. Lengyel \cite{lengyel1} established that
\begin{equation}
\nu_{2}(k! S(n,k)) = k-1,
\end{equation}
\noindent
for $n= a2^{q}$, $a$ odd and $q \geq k-2$. In the special case $k=5$, this
yields $\nu_{2}(S(n,5)) =1$ for $n = a2^{q}$ and $q \geq 3$. These values of 
$n$ have the form $n = 8a \cdot 2^{q-3}$, so this is included in 
Proposition \ref{prop1}. 

\vskip 0.1in

\noindent
{\bf Remark}. A similar argument yields 
\begin{equation}
\nu_{2}(S(8i+3,5)) = 1 \text{ and } \nu_{2}(S(8i+7,5)) \geq 2. 
\end{equation}
\noindent
We conclude that 
\begin{equation}
3-\text{level}: \quad \{ C_{3,4}, \, C_{3,7} \}. 
\end{equation}
\noindent
This illustrates the main conjecture: each of the classes of the $2$-level 
produces a constant class and a second one in the $3$-level.

We now consider the class $C_{3,4}$ and its splitting as $C_{4,4}$ and 
$C_{4,12}$. The data for $C_{3,4}$ starts as 
\begin{equation}
C_{3,4} = \{ 3, \, 2, \, 6, \, 2, \, 3, 
\, 2, \, 4, \, 2, \, 3, \, 2, \, 5, \, 2, \, 3, \, 2, \, 4, \, 2, \, 3, \, 2, 
\, 11, \, 2, \, 3, \, 2, \ldots \}. 
\end{equation}
\noindent
This suggests that the values with even index are all $2$. This is
verified below.   

\begin{proposition}
\label{prop2}
The Stirling numbers of order $5$ satisfy
\begin{equation}
\nu_{2}(S(16i+4,5)) = 2 \text{ for all } i \geq 1. 
\end{equation}
\end{proposition}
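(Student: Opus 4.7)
The plan is to follow exactly the template used in Propositions \ref{prop1} and \ref{prop1a}: start from the explicit formula
\begin{equation}
24 S(16i+4,5) = 5^{16i+3} - 4^{16i+4} + 2 \cdot 3^{16i+4} - 2^{16i+5} + 1, \nonumber
\end{equation}
and reduce the right hand side modulo a suitable power of $2$. Since $\nu_{2}(24) = 3$, proving $\nu_{2}(S(16i+4,5)) = 2$ amounts to showing $\nu_{2}(24 S(16i+4,5)) = 5$, i.e.\ that the right hand side is congruent to $32$ modulo $64$ (not to $0$).

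The bookkeeping is straightforward. For $i \geq 1$, one has $16i+4 \geq 20$ and $16i+5 \geq 21$, so $4^{16i+4} \equiv 0$ and $2^{16i+5} \equiv 0 \pmod{64}$. The next step is to identify the orders of $5$ and $3$ modulo $64$: a short computation shows $5^{16} \equiv 1$ and $3^{16} \equiv 1 \pmod{64}$, so that
\begin{equation}
5^{16i+3} \equiv 5^{3} \pmod{64} \quad \text{and} \quad 3^{16i+4} \equiv 3^{4} \pmod{64} \nonumber
\end{equation}
for every $i$. Substituting the residues $5^{3} \equiv 61$ and $3^{4} \equiv 17 \pmod{64}$ into the reduction yields $61 + 2\cdot 17 + 1 \equiv 96 \equiv 32 \pmod{64}$.

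From $24 S(16i+4,5) = 64t + 32$ for some integer $t$, dividing gives $3 S(16i+4,5) = 4(2t+1)$, and since $3$ is a unit in $\mathbb{Z}_{2}$, one concludes $\nu_{2}(S(16i+4,5)) = 2$, as desired. The lower bound $\nu_{2}(S(16i+4,5)) \geq 2$ is already guaranteed by Proposition \ref{prop1a}; the point of the present argument is therefore to rule out equality with any value strictly larger than $2$ by exhibiting a nonzero residue modulo $64$. No genuine obstacle is expected: the only care needed is in the arithmetic of the orders of $5$ and $3$ modulo $64$, which is a finite and routine verification.
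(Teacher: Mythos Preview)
Your proposal is correct and follows essentially the same approach as the paper: reduce $24S(16i+4,5)$ modulo $64$ using $5^{16}\equiv 1$, $5^{3}\equiv 61$, $3^{16}\equiv 1$, $3^{4}\equiv 17$, obtain the residue $32$, and conclude via $3S(16i+4,5)=4(2t+1)$. The only addition is that you spell out why the $4^{16i+4}$ and $2^{16i+5}$ terms vanish modulo $64$, which the paper leaves implicit.
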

\begin{proof}
We analyze the identity 
\begin{equation}
24S(16i+4,5) = 5^{16i+3} - 4^{16i+4} + 2 \cdot 3^{16i+4} - 2^{16i+5} + 1
\end{equation}
\noindent
modulo $64$. Using $5^{16} \equiv 1, \, 5^{3} \equiv 61, \, 3^{16} \equiv 1$ 
and $3^{4} \equiv 17$, we obtain
\begin{equation}
5^{16i+3} - 4^{16i+4} + 2 \cdot 3^{16i+4} - 2^{16i+5} + 1 \equiv 32
\bmod 64. 
\end{equation}
\noindent
Therefore $24S(16i+4,5) = 64t+32$ for some $t \in \mathbb{N}$. This gives
$3S(16i+4,5) = 4(2t+1)$, and it follows that $\nu_{2}(S(16i+4,5)) = 2$. 
\end{proof}

\noindent
{\bf Note}. A similar argument shows that $\nu_{2}(S(16i+12,5)) \geq 3,  \,
\nu_{2}(S(16i+7,5)) = 2$ and $\nu_{2}(S(16i+15,5)) \geq 3$. Therefore
the $4$-level is $\{ C_{4,12}, \, C_{4,15} \}$. \\

This splitting process of the classes can be continued and, according to our 
main conjecture, the number of elements in the $m$-level is always 
constant. To prove the 
statement similar to Propositions \ref{prop1} and \ref{prop2}, we must
analyze the congruence 
\begin{equation}
24S(2^{m}i+j,5) \equiv  5^{2^{m}i+j-1} - 4^{2^{m}i+j} 
+ 2 \cdot 3^{2^{m}i+j} - 
2^{2^{m}i+j+1} +1 \text{ mod } 2^{m+2}. 
\label{messcong}
\end{equation}
\noindent
We present a proof of this conjecture, for the special case 
$k=5$, in the next section. 

\vskip 0.1in

Lundell \cite{lundell1} studied the Stirling-like numbers
\begin{equation}
T_{p}(n,k) = \sum_{j=0}^{k} 
(-1)^{k-j} \binom{k}{j} j^{n},
\end{equation}
\noindent
where the prime $p$ is fixed, and the index $j$ is omitted in the sum 
if it is divisible by $p$. 
Clarke \cite{clarke1} conjectured that 
\begin{equation}
\nu_{p}(k! \, S(n,k)) = \nu_{p}(T(n,k)). 
\label{clarke-conj}
\end{equation}
\noindent
From this conjecture he derives an expression for $\nu_{2}(S(n,5))$ in 
terms of the zeros of the form $f_{0,5}(x) = 5 + 10 \cdot 3^{x} + 5^{x}$ 
in the ring of $2$-adic integers $\mathbb{Z}_{2}$. 

\begin{theorem}
Let $u_{0}$ and $u_{1}$ be the $2$-adic zeros of the function $f_{0,5}$. 
Then, under the assumption that conjecture (\ref{clarke-conj}) holds, we 
have
\begin{equation}
\nu_{2}(S(n,5)) = \begin{cases} 
         -1 + \nu_{2}(n-u_{0}) \quad \text{ if } n \text{ is even}, \\
         -1 + \nu_{2}(n-u_{1}) \quad \text{ if } n \text{ is odd}. 
      \end{cases}
\end{equation}
\noindent
Here $u_{0}$ is the unique zero of $f_{0,5}$ that satsifies $u_{0} \in 
2 \mathbb{Z}_{2}$ and $u_{1}$ is the other zero of $f_{0,5}$ and satisfies 
$u_{1} \in 1 + 2 \mathbb{Z}_{2}$. 
\end{theorem}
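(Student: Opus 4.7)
The plan is to convert the statement, via the assumed conjecture (\ref{clarke-conj}), into a question about the $2$-adic analysis of the entire analytic function $f_{0,5}$ on $\mathbb{Z}_2$, then to isolate its two zeros on the parity cosets and read off the valuation from a factorization. A direct evaluation of the Lundell-type sum gives $T_2(n,5) = 5 + 10 \cdot 3^n + 5^n = f_{0,5}(n)$, and since $\nu_2(5!) = 3$, the conjecture reduces the theorem to the single identity
\[
\nu_2(f_{0,5}(n)) = 2 + \nu_2(n - u_\epsilon), \qquad \epsilon \equiv n \bmod 2,
\]
once one knows that $f_{0,5}$ has a unique zero $u_0 \in 2\mathbb{Z}_2$ and a unique zero $u_1 \in 1 + 2\mathbb{Z}_2$.

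Splitting $n = 2y$ and $n = 2y+1$ produces the two analytic functions
\[
g(y) = 5 + 10 \cdot 9^y + 25^y, \qquad g_1(y) = 5 + 30 \cdot 9^y + 5 \cdot 25^y,
\]
which converge on $\mathbb{Z}_2$ because $\alpha := \log 9$ and $\beta := \log 25$ both have $2$-adic valuation $3$. Using Legendre's identity $\nu_2(k!) = k - s_2(k)$ and the strict inequalities $\nu_2(10\alpha^k) > \nu_2(\beta^k)$ and $\nu_2(30\alpha^k) > \nu_2(5\beta^k)$, the Taylor coefficients satisfy in both cases $\nu_2(a_k) = 2k + s_2(k)$ for $k \geq 1$, while $\nu_2(a_0) = 4$ for $g$ and $\nu_2(a_0) = 3$ for $g_1$. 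The $2$-adic Newton polygon then has exactly one nonpositive segment in each case — slope $-1$ of length $1$ for $g$, slope $0$ of length $1$ for $g_1$ — yielding a unique zero $v_0 \in \mathbb{Z}_2$ of $g$ (with $\nu_2(v_0) = 1$) and a unique zero $w_1 \in \mathbb{Z}_2^\times$ of $g_1$. Setting $u_0 = 2v_0$ and $u_1 = 2w_1 + 1$ recovers the two zeros of $f_{0,5}$ on the parity cosets.

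To obtain the valuation formula I factor $g(y) = (y - v_0) h(y)$ in the ring of analytic functions on $\mathbb{Z}_2$, and similarly $g_1(y) = (y - w_1) h_1(y)$. By the multiplicativity of Newton polygons, the polygon of $h$ is that of $g$ with the slope-$(-1)$ segment removed, hence begins at $(0, 3)$ with all slopes $\geq 2$; its Taylor coefficients $b_k$ therefore satisfy $\nu_2(b_k) \geq 3 + 2k$, with equality at $k=0$. Consequently $\nu_2(h(y)) = 3$ \emph{uniformly} for $y \in \mathbb{Z}_2$, since the constant term $b_0$ strictly dominates all higher-order contributions. This gives $\nu_2(f_{0,5}(2y)) = 3 + \nu_2(y - v_0) = 2 + \nu_2(n - u_0)$, and the identical argument for $h_1$ finishes the odd case. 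The main obstacle is precisely this globalization step: the conclusion $\nu_2(h(y)) \equiv 3$ on all of $\mathbb{Z}_2$ depends on the sharp identity $\nu_2(a_k) = 2k + s_2(k)$ rather than merely on its $\geq$ version, because any unexpected cancellation in $10\alpha^k + \beta^k$ or $30\alpha^k + 5\beta^k$ would allow the Newton polygon of $h$ to acquire a slope-$0$ segment, produce a second zero of $g$ or $g_1$ in $\mathbb{Z}_2$, and invalidate the entire formula.
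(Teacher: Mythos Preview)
The paper does not give its own proof of this theorem; it simply records it as a result of Clarke \cite{clarke1}, so there is no in-paper argument to compare yours against. Judged on its own terms, your argument is correct and follows the natural $p$-adic analytic route (Newton polygon plus a Weierstrass-type factorization), which is presumably close in spirit to what Clarke does.

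A few remarks. The reduction to $\nu_2(f_{0,5}(n)) = 2 + \nu_2(n - u_\epsilon)$ via the conjecture and $\nu_2(5!) = 3$ is right, as is the parity split into $g$ and $g_1$ and the coefficient computation $\nu_2(a_k) = 2k + s_2(k)$ (there is no cancellation since $\nu_2(10\alpha^k) = 3k+1 \neq 3k = \nu_2(\beta^k)$, and similarly for $g_1$). The Newton polygons then each have a unique segment of nonpositive slope and length one, and since the resulting distinguished polynomial over $\mathbb{Q}_2$ has degree one, the corresponding zero lies in $\mathbb{Z}_2$ itself, as you need.

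The step you flag as the main obstacle can be made fully explicit and elementary without invoking multiplicativity of Newton polygons for power series. From $g(y) = (y - v_0)h(y)$ and $g(v_0) = 0$ one obtains the closed form
\[
b_k \;=\; \sum_{j > k} a_j\, v_0^{\,j-k-1},
\]
so $\nu_2\bigl(a_j v_0^{\,j-k-1}\bigr) = (2j + s_2(j)) + (j - k - 1) \geq 3(k+1) + 1 - k - 1 = 2k + 3$ for every $j \geq k+1$. Hence $\nu_2(b_k) \geq 2k + 3$ for all $k \geq 0$, with equality at $k = 0$ forced by the dominant term $a_1$; this gives $\nu_2(h(y)) = 3$ on all of $\mathbb{Z}_2$. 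The identical computation with $w_1$ (of valuation $0$) handles $h_1$. This also shows the ``obstacle'' is milder than you suggest: only the exact value $\nu_2(a_1) = 3$ is actually needed, while for $k \geq 2$ the inequality $\nu_2(a_k) \geq 2k+1$ already suffices.
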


Clarke also obtained in \cite{clarke1} similar 
expressions for $\nu_{2}(S(n,6))$ and $\nu_{2}(S(n,7))$
in terms of zeros of the functions 
\begin{equation}
f_{0,6} = -6-20 \cdot 3^{x} - 6 \cdot 5^{x} \text{ and }
f_{0,7} = 7 + 35 \cdot 3^{x} + 21 \cdot 5^{x} + 7^{x}. \nonumber
\end{equation}

\section{Proof of the main conjecture for $k=5$} \label{sec-conjecture} 
\setcounter{equation}{0}

The goal of this section is to prove the main conjecture in the case $k=5$.
The parameter $m_{0}$ is $3$ in view of 
$2^{2} < 5 \leq 2^{3}$. In the previous 
section we have verified that $m_{0}-1=2$ is the first level for constant 
classes. We now prove this  splitting of classes.

\begin{theorem}
\label{conj-5}
Assume $m \geq m_{0}$. Then the $m$-level consists of 
exactly two split classes: $C_{m,j}$ and 
$C_{m,j+2^{m-1}}$. They satisfy $\nu_{2}(C_{m,j}) > m-3$ and 
$\nu_{2}(C_{m,j+2^{m-1}}) > m-3$. Then exactly one, call it $C^{1}$, satisfies 
$\nu_{2}(C^{1}) = \{ m-2 \}$ and the other one, call it $C^{2}$, 
satisfies $\nu_{2}(C^{2}) > m-2$. 
\end{theorem}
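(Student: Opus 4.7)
The plan is to argue by induction on $m \geq m_0 = 3$, with the base case $m = 3$ supplied by the results of Section 3. The central analytic tool is the closed form
\begin{equation*}
24 \, S(n, 5) = 5^{n-1} - 4^n + 2 \cdot 3^n - 2^{n+1} + 1,
\end{equation*}
combined with the Lifting-the-Exponent Lemma, which gives exactly $\nu_2(5^{2^{m-1}} - 1) = \nu_2(3^{2^{m-1}} - 1) = m+1$. For $n = 2^{m-1}i + j$ with $i$ large, the terms $4^n$ and $2^{n+1}$ vanish modulo $2^{m+1}$, and the periodicities above produce the fundamental congruence $24 \, S(n,5) \equiv G(j) \pmod{2^{m+1}}$, where $G(j) := 5^{j-1} + 2 \cdot 3^j + 1$ depends only on $j$ modulo $2^{m-1}$. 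The inductive hypothesis $\nu_2(C_{m-1,j}) > m-3$ then translates to the arithmetic statement $\nu_2(G(j)) \geq m+1$.

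For the inductive step I consider the two split children $C_{m,j}$ and $C_{m, j + 2^{m-1}}$ of a parent class $C_{m-1,j}$, now analyzed modulo $2^{m+2}$ (valid since $\nu_2(5^{2^m} - 1) = m+2$). The key computation is
\begin{equation*}
G(j + 2^{m-1}) - G(j) = 5^{j-1}\bigl(5^{2^{m-1}} - 1\bigr) + 2 \cdot 3^j \bigl(3^{2^{m-1}} - 1\bigr),
\end{equation*}
whose first summand has $\nu_2 = m+1$ with odd quotient upon dividing by $2^{m+1}$, while the second has $\nu_2 = m+2$. Hence the full difference satisfies $\nu_2 = m+1$ exactly, with odd leading digit at position $m+1$. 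Combined with the inductive bound $\nu_2(G(j)) \geq m+1$, this produces a dichotomy: if $\nu_2(G(j)) = m+1$ exactly, then the odd leading digits of $G(j)$ and of the difference cancel, forcing $\nu_2(G(j + 2^{m-1})) \geq m+2$ and making $C_{m,j}$ the class of constant valuation $m-2$ while $C_{m,j + 2^{m-1}}$ has $\nu_2 > m-2$; if instead $\nu_2(G(j)) \geq m+2$, then $\nu_2(G(j + 2^{m-1})) = m+1$ exactly and the roles reverse. Either way, exactly one child has constant valuation $m-2$ and the other has $\nu_2 > m-2$, as required.

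The main obstacle, I expect, will be the parity verification: the claim that the leading $2^{m+1}$-digit of $G(j + 2^{m-1}) - G(j)$ is odd is what forces the cancellation argument to be either complete or to fail completely, and it depends critically on the exactness of the LTE formula giving $(5^{2^{m-1}} - 1)/2^{m+1}$ odd. A secondary bookkeeping issue is to carry the LTE-induced periodicities through at the correct modulus at each level, so that $\nu_2(S(n,5))$ is resolved to the precision $m-2$ needed to distinguish the two children without losing information in the error terms.
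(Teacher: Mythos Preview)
Your proposal is correct and follows the same inductive skeleton as the paper: compare the two children of a surviving class by computing the exact $2$-adic valuation of their difference, using the identities $\nu_2(5^{2^{m}}-1)=\nu_2(3^{2^{m}}-1)=m+2$ (the paper's Lemmas~4.1--4.2; your LTE). The paper, however, reaches that difference through the iterated recurrence
\[
S(n,5)-5^{2^{m}}S(n-2^{m},5)=\sum_{j=0}^{2^{m}-1}5^{j}S(n-j-1,4),
\]
then expands $S(\cdot,4)$ by its closed form, sums four geometric series, and invokes the additional Lemma~4.3, $\nu_2(5^{2^{m}}-3^{2^{m}})=m+3$. Your route through the function $G(j)=5^{j-1}+2\cdot 3^{j}+1$ is more economical: the difference $G(j+2^{m-1})-G(j)$ is a two-term expression whose valuation follows immediately from LTE, and Lemma~4.3 is never needed. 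An added bonus of your formulation is that the constancy of $\nu_2$ across an entire child class is automatic, since $G$ depends only on the residue $j$ and not on the running index $i$; in the paper's version, the fact that it is the \emph{same} sibling that is constant for every $n$ is left implicit in the parity argument for $X$ and $Y$.
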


The proof of this theorem requires several elementary results of $2$-adic 
valuations.  

\begin{lemma}
\label{lemma-a1}
For $m \in \mathbb{N}, \, \nu_{2} \left( 5^{2^{m}} -1 \right) = m+2$. 
\end{lemma}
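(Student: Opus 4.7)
The plan is to prove this by induction on $m$, using the factorization $x^2 - 1 = (x-1)(x+1)$ to relate consecutive cases.

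For the base case $m = 0$, one just notes $5^{2^0} - 1 = 4$, so $\nu_2(4) = 2 = 0 + 2$, which matches. For the inductive step, assuming $\nu_2(5^{2^m} - 1) = m + 2$, I would write
\begin{equation}
5^{2^{m+1}} - 1 = \bigl(5^{2^m} - 1\bigr)\bigl(5^{2^m} + 1\bigr),
\end{equation}
so that $\nu_2(5^{2^{m+1}} - 1) = \nu_2(5^{2^m} - 1) + \nu_2(5^{2^m} + 1)$. The goal then reduces to showing $\nu_2(5^{2^m} + 1) = 1$ for every $m \geq 0$.

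This last step is straightforward modular arithmetic: since $5 \equiv 1 \pmod{4}$, any power satisfies $5^{2^m} \equiv 1 \pmod{4}$, hence $5^{2^m} + 1 \equiv 2 \pmod{4}$, which gives exactly $\nu_2(5^{2^m} + 1) = 1$. Combining this with the inductive hypothesis yields $\nu_2(5^{2^{m+1}} - 1) = (m+2) + 1 = (m+1) + 2$, completing the induction.

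I do not anticipate a significant obstacle here; this is essentially the standard lifting-the-exponent identity specialized to $p = 2$ and the base $5 \equiv 1 \pmod 4$, and the factorization plus one congruence modulo $4$ handles everything. The only mild subtlety is that at $p = 2$ one cannot use the naive LTE formula for arbitrary bases, but because $5 \equiv 1 \pmod 4$ the obstruction disappears and the telescoping works cleanly.
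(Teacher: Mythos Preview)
Your proof is correct and follows essentially the same approach as the paper: induction via the factorization $5^{2^{m+1}}-1=(5^{2^m}-1)(5^{2^m}+1)$ together with the observation that $5^{2^m}+1\equiv 2\pmod 4$. The only cosmetic difference is that the paper takes $m=1$ (with $\nu_2(24)=3$) as the base case rather than $m=0$.
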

\begin{proof}
Start at $m=1$ with $\nu_{2}(24) = 3$. The inductive step uses 
\begin{equation}
5^{2^{m+1}}-1 = ( 5^{2^{m}}-1 ) \cdot ( 5^{2^{m}} +1 ). 
\nonumber
\end{equation}
\noindent
Now $5^{k} + 1 \equiv 2 \bmod 4$ so that $5^{2^{m}}+1 = 2 \alpha_{1}$ with 
$\alpha_{1}$ odd. Thus
\begin{equation}
\nu_{2} ( 5^{2^{m+1}}-1 )  = 
\nu_{2}( 5^{2^{m}}-1 )  + \nu_{2}( 5^{2^{m}} +1 ) 
= (m+2) +1 = m+3.
\nonumber 
\end{equation}
\end{proof}

The same type of argument produces the next lemma. 

\begin{lemma}
\label{lemma-a2}
For $m \in \mathbb{N}, \,  \nu_{2} ( 3^{2^{m}} -1 ) = m+2$. 
\end{lemma}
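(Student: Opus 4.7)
The plan is to mirror the proof of Lemma \ref{lemma-a1} almost verbatim, inducting on $m$ and exploiting the factorization $a^{2^{m+1}}-1 = (a^{2^m}-1)(a^{2^m}+1)$ with $a=3$. The only thing to verify independently of the induction is that the second factor $3^{2^m}+1$ contributes exactly one factor of $2$; the rest is bookkeeping.

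For the base case $m=1$, one computes directly $3^{2}-1 = 8$, so $\nu_2(3^{2}-1) = 3 = 1+2$, as claimed. For the inductive step, assume $\nu_2(3^{2^m}-1) = m+2$ and write
\begin{equation}
3^{2^{m+1}} - 1 = \left(3^{2^m}-1\right)\left(3^{2^m}+1\right). \nonumber
\end{equation}
Since $9 \equiv 1 \pmod 4$, we have $3^{2^m} = 9^{2^{m-1}} \equiv 1 \pmod 4$ for every $m \geq 1$, so $3^{2^m}+1 \equiv 2 \pmod 4$, i.e., $3^{2^m}+1 = 2\alpha$ for some odd integer $\alpha$. Therefore $\nu_2(3^{2^m}+1) = 1$, and additivity of $\nu_2$ under multiplication gives
\begin{equation}
\nu_2\!\left(3^{2^{m+1}}-1\right) \;=\; \nu_2\!\left(3^{2^m}-1\right) + \nu_2\!\left(3^{2^m}+1\right) \;=\; (m+2)+1 \;=\; m+3, \nonumber
\end{equation}
completing the induction.

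There is no real obstacle here — the argument is entirely parallel to Lemma \ref{lemma-a1}, the only substitution being $5 \mapsto 3$. The one small point that warrants a sentence of justification (rather than being quoted from the previous lemma) is the congruence $3^{2^m} \equiv 1 \pmod 4$ for $m \geq 1$, which ensures the ``$+1$'' factor carries exactly one power of $2$; for $a=5$ this was trivial since $5 \equiv 1 \pmod 4$ already, whereas for $a=3$ one needs $m \geq 1$ so that the exponent is even.
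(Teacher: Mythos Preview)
Your proof is correct and matches the paper's intended argument exactly: the paper simply remarks that ``the same type of argument produces the next lemma,'' and your write-up is precisely that argument with $5$ replaced by $3$. Your extra sentence explaining why $3^{2^m}\equiv 1\pmod 4$ requires $m\geq 1$ (unlike the case $a=5$) is a nice clarification but does not depart from the paper's approach.
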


\begin{lemma}
\label{lemma-a3}
For $m \in \mathbb{N}, \, \nu_{2} ( 5^{2^{m}} - 3^{2^{m}} ) = m+3$. 
\end{lemma}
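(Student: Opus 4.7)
The plan is to mimic the inductive proofs of Lemmas~\ref{lemma-a1} and~\ref{lemma-a2}, using the factorization
\begin{equation}
5^{2^{m+1}} - 3^{2^{m+1}} = \left( 5^{2^{m}} - 3^{2^{m}} \right) \left( 5^{2^{m}} + 3^{2^{m}} \right),
\nonumber
\end{equation}
and the fact that $\nu_{2}$ is additive on products. So the task reduces to (i) verifying a base case and (ii) computing $\nu_{2}(5^{2^{m}}+3^{2^{m}})$ in the inductive step.

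For the base case $m=1$, I would simply compute directly: $5^{2}-3^{2} = 16$, so $\nu_{2}(16) = 4 = 1+3$, matching the claim.

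For the inductive step, assume $\nu_{2}(5^{2^{m}}-3^{2^{m}}) = m+3$ for some $m \geq 1$. The point is to show that $\nu_{2}(5^{2^{m}}+3^{2^{m}}) = 1$ whenever $m \geq 1$. This I would obtain from the congruences $5 \equiv 1$ and $3 \equiv -1 \pmod{4}$, which give
\begin{equation}
5^{2^{m}} + 3^{2^{m}} \equiv 1 + (-1)^{2^{m}} \equiv 2 \pmod{4},
\nonumber
\end{equation}
since $2^{m}$ is even for $m \geq 1$. So the second factor contributes exactly one power of $2$. Combining, $\nu_{2}(5^{2^{m+1}}-3^{2^{m+1}}) = (m+3) + 1 = (m+1)+3$, closing the induction.

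There is no real obstacle here; the argument is essentially the same template used for $5^{2^{m}}-1$ and $3^{2^{m}}-1$. The one point deserving slight care is that the base case must be taken at $m=1$ rather than $m=0$: the identity fails at $m=0$ (where $5-3=2$ has valuation $1$, not $3$), because at that point the exponent $2^{0}=1$ is odd and the companion factor $5^{1}+3^{1}=8$ carries an extra pair of twos that makes the general doubling argument break down; starting the induction at $m=1$ avoids this issue.
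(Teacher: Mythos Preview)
Your proof is correct and follows essentially the same inductive approach as the paper, using the factorization $5^{2^{m+1}}-3^{2^{m+1}} = (5^{2^{m}}-3^{2^{m}})(5^{2^{m}}+3^{2^{m}})$ and showing the second factor has valuation exactly~$1$. The only minor difference is that the paper establishes $\nu_{2}(5^{2^{m}}+3^{2^{m}})=1$ by writing the sum as $(5^{2^{m}}-1)+(3^{2^{m}}+1)$ and invoking Lemma~\ref{lemma-a1} together with the ultrametric inequality, whereas you obtain it directly via the congruence modulo~$4$.
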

\begin{proof}
The inductive step uses 
\begin{equation}
5^{2^{m+1}} - 3^{2^{m+1}} = ( 5^{2^{m}} - 3^{2^{m}} ) \times 
\left( ( 5^{2^{m}}-1 ) + ( 3^{2^{m}} + 1 ) \right). 
\nonumber
\end{equation}
\noindent
Therefore $\nu_{2}( 5^{2^{m}}-1) = m+2$ and $3^{2^{m}} 
\equiv 1 \bmod 4$, thus $\nu_{2}(3^{2^{m}} + 1) = 1$. We conclude that
\begin{equation}
\nu_{2} ( ( 5^{2^{m}} -1) + (3^{2^{m}}+1) ) = 
\text{Min} \{ m+2, 1 \} = 1. \nonumber
\end{equation}
\noindent
We obtain
\begin{equation}
\nu_{2} ( 5^{2^{m+1}} - 3^{2^{m+1}} ) = m+4, 
\end{equation}
\noindent
and this concludes the inductive step.
\end{proof}

The recurrence (\ref{recurrence}) for the Stirling numbers $S(n,5)$  is
$S(n,5) = 5S(n-1,5) + S(n-1,4)$. Iterating yields the next lemma.

\begin{lemma}
\label{recurr-5}
Let $t \in \mathbb{N}$. Then
\begin{equation}
S(n,5) - 5^{t} S(n-t,5) = \sum_{j=0}^{t-1} 5^{j} S(n-j-1,4). 
\end{equation}
\end{lemma}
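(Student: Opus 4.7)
The plan is to obtain this identity by iterating the Stirling recurrence $S(n,5)=5S(n-1,5)+S(n-1,4)$ given in equation (\ref{recurrence}) with $k=5$. In fact, the cleanest way to present it is as a telescoping sum, but an equivalent induction on $t$ also works. I would open with the remark that the recurrence rewrites as $S(n-j,5)-5\,S(n-j-1,5)=S(n-j-1,4)$ for every $j$, so that multiplying both sides by $5^{j}$ and summing over $j=0,1,\dots,t-1$ makes the left-hand side collapse.

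More precisely, I would write
\begin{equation}
S(n,5)-5^{t}S(n-t,5)=\sum_{j=0}^{t-1}\Bigl(5^{j}S(n-j,5)-5^{j+1}S(n-j-1,5)\Bigr),
\nonumber
\end{equation}
observe that the right-hand side telescopes, and then substitute the recurrence into each summand to obtain $\sum_{j=0}^{t-1}5^{j}S(n-j-1,4)$. This is essentially one display's worth of work.

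If one prefers induction on $t$, the base case $t=1$ is precisely the recurrence $S(n,5)-5S(n-1,5)=S(n-1,4)$. For the inductive step, assume the identity at level $t$; then
\begin{equation}
S(n,5)-5^{t+1}S(n-t-1,5)=\bigl(S(n,5)-5^{t}S(n-t,5)\bigr)+5^{t}\bigl(S(n-t,5)-5\,S(n-t-1,5)\bigr),
\nonumber
\end{equation}
and applying the inductive hypothesis to the first parenthesis and the recurrence to the second yields $\sum_{j=0}^{t-1}5^{j}S(n-j-1,4)+5^{t}S(n-t-1,4)=\sum_{j=0}^{t}5^{j}S(n-j-1,4)$, completing the step.

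There is no real obstacle here; the lemma is a formal consequence of the two-term recurrence, and the only care needed is to make sure $n-t\geq 1$ so that the Stirling numbers on the right are all defined (which is implicit in how the lemma will be used in the next section when $t$ is taken within the allowed range). I would state this range assumption explicitly at the start of the proof to avoid any later ambiguity.
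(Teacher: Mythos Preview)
Your proposal is correct and follows exactly the paper's approach: the paper simply states that iterating the recurrence $S(n,5)=5S(n-1,5)+S(n-1,4)$ yields the lemma, and your telescoping/induction argument is precisely that iteration written out in full. The only addition is your explicit remark on the range of $t$, which the paper leaves implicit.
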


\noindent
{\bf Proof of theorem \ref{conj-5}}. We have already checked the conjecture for
the $2$-level. The inductive hypothesis states that there is an 
$(m-1)$-level survivor of the form
\begin{equation}
C_{m,k} = \{ \nu_{2} ( S(2^{m}n+k,5) ): \, n \geq 1 \},
\end{equation}
\noindent
where $\nu_{2} ( S(2^{m}n+k,5) ) > m-2$. At the next level, $C_{m,k}$
splits into the two classes 
\begin{eqnarray}
C_{m+1,k} & = & \{ \nu_{2} ( S(2^{m+1}n+k,5) ): \, n \geq 1 \} \quad 
\text{ and }
\nonumber \\
C_{m+1,k+2^{m}} & = & \{ \nu_{2} ( S(2^{m+1}n+k+2^{m},5) ): \, n \geq 1 \},
\nonumber 
\end{eqnarray}
\noindent
and every element of each of these two classes is greater or equal 
to $m-1$. We now prove that one of these classes reduces to 
the singleton $\{ m-1 \}$ 
and that every element in the other class is strictly greater than $m-1$.

The first step is to use Lemma \ref{recurr-5} to compare the values of 
$S(2^{m+1}n+k,5)$ and $S(2^{m+1}n+k+2^{m},5)$. Define 
\begin{equation}
M = 2^{m}-1 \text{ and } N = 2^{m+1}n+k,
\end{equation}
\noindent
and use (\ref{recurrence}) to write
\begin{equation}
S(2^{m+1}n+k+2^{m},5) - 5^{2^{m}} S(2^{m+1}n+k,5) = 
\sum_{j=0}^{M} 5^{M-j} S(N+j,4). 
\label{compare}
\end{equation}
\noindent
The next proposition establishes the $2$-adic valuation of the right hand side.

\begin{proposition}
\label{nu2sum}
With the notation as above,
\begin{equation}
\nu_{2} \left( \sum_{j=0}^{M} 5^{M-j} S(N+j,4) \right) = m-1.
\end{equation}
\end{proposition}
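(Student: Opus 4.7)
The plan is to use the explicit formula for $S(n,4)$ from Lemma \ref{stirvalue4}, namely $24\,S(n,4) = 4^{n} - 4\cdot 3^{n} + 6\cdot 2^{n} - 4$, to convert the sum into closed form. Multiplying the sum by $24$ and exchanging the order of summation yields four geometric series in $j$, each of the form $\sum_{j=0}^{M} 5^{M-j} r^{\,j}$ with $r \in \{1,2,3,4\}$. Each series sums to $(5^{M+1} - r^{M+1})/(5-r)$, and since $M+1 = 2^{m}$, the fractions cancel cleanly. I expect the result to telescope into the form
\begin{equation}
24 \sum_{j=0}^{M} 5^{M-j} S(N+j,4) \;=\; 5^{2^{m}} A_{N} - A_{N+2^{m}}, \nonumber
\end{equation}
where $A_{N} := 4^{N} - 2\cdot 3^{N} + 2\cdot 2^{N} - 1$.

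The next step is to rewrite this as
\begin{equation}
24\sum \;=\; (5^{2^{m}} - 1)\,A_{N} \;-\; 4^{N}(4^{2^{m}} - 1) \;+\; 2\cdot 3^{N}(3^{2^{m}} - 1) \;-\; 2\cdot 2^{N}(2^{2^{m}} - 1), \nonumber
\end{equation}
so that each factor has an easily computed $2$-adic valuation: Lemma \ref{lemma-a1} gives $\nu_{2}(5^{2^{m}} - 1) = m+2$, Lemma \ref{lemma-a2} gives $\nu_{2}(3^{2^{m}} - 1) = m+2$, while $4^{2^{m}} - 1 = 2^{2^{m+1}} - 1$ and $2^{2^{m}} - 1$ are both odd. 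A quick check modulo $4$ (using $3^{N} \equiv (-1)^{N}$) shows $A_{N} \equiv 1 \pmod{4}$ for $N \geq 2$, so $\nu_{2}(A_{N}) = 0$.

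Adding the exponents, the four summands on the right have $2$-adic valuations $m+2$, $2N$, $m+3$, and $N+1$ respectively. Since $N = 2^{m+1}n + k \geq 2^{m+1}$ in the regime of interest, both $2N$ and $N+1$ far exceed $m+2$, so the unique minimum valuation is $m+2$, attained by the first term. The ultrametric inequality (\ref{ultramet}), which becomes an equality when one term strictly dominates, then yields $\nu_{2}(24\sum) = m+2$, and subtracting $\nu_{2}(24) = 3$ gives the desired value $m-1$.

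The main obstacle is not conceptual but bookkeeping: one must carefully perform the geometric sum manipulation to land in the right form where $(5^{2^{m}} - 1)A_{N}$ becomes visible as the dominant term, and verify that $A_{N}$ is indeed a $2$-adic unit. Once that packaging is done, the valuation lemmas \ref{lemma-a1} and \ref{lemma-a2} do all the remaining work, and the size inequality $N \geq 2^{m+1} \gg m$ guarantees that the non-dominant terms do not interfere.
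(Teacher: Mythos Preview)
Your proof is correct and follows essentially the same approach as the paper: compute the sum in closed form via the explicit formula for $S(n,4)$ and geometric series, then apply the valuation lemmas together with the ultrametric inequality. The only cosmetic difference is your packaging via $A_{N}$ and the factor $(5^{2^{m}}-1)A_{N}$, which lets you invoke Lemma~\ref{lemma-a2} on $3^{2^{m}}-1$ instead of the paper's Lemma~\ref{lemma-a3} on $5^{2^{m}}-3^{2^{m}}$; the two regroupings are algebraically equivalent.
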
 
\begin{proof}
The explicit formula (\ref{formulastir}) yields 
$6 S(n,4) = 4^{n-1} + 3 \cdot 2^{n-1} - 3^{n} -1$. Thus 
\begin{eqnarray}
6 \sum_{j=0}^{M} 5^{M-j} S(N+j,4) & = & 4^{N-1} ( 5^{M+1}-4^{M+1}) + 
2^{N-1}(5^{M+1}-2^{M+1}) \nonumber \\
& & - 3^{N} \times \tfrac{1}{2} (5^{M+1}-3^{M+1}) - 
\tfrac{1}{4}(5^{M+1}-1). 
\nonumber
\end{eqnarray}
\noindent
The results in Lemmas \ref{lemma-a1}, \ref{lemma-a2} and \ref{lemma-a3} 
yield 
\begin{equation}
6 \sum_{j=0}^{M} 5^{M-j} S(N+j,4) = 
4^{N-1} \alpha_{1} + 2^{N-1}\alpha_{2} - 3^{N} \cdot 2^{m+2} \alpha_{3} - 
2^{m} \alpha_{4},
\end{equation}
\noindent
with $\alpha_{j}$ odd integers. Write this as
\begin{equation}
6 \sum_{j=0}^{M} 5^{M-j} S(N+j,4) = 
2^{N-1} \left( 2^{N-1}\alpha_{1} + \alpha_{2} \right) - 2^{m} 
\left( 4 \alpha_{3} 3^{N} + 1 \right) \equiv T_{1} + T_{2}. \nonumber
\end{equation}
\noindent
Then $\nu_{2}(T_{1}) = N-1 > m = \nu_{2}(T_{2}),$ and we obtain
\begin{equation}
\nu_{2} \left( \sum_{j=0}^{M} 5^{M-j} S(N+j,4) \right) = m-1. 
\end{equation}
\noindent
We conclude that
\begin{equation}
S( 2^{m+1}n+k+2^{m},5) - 5^{2^{m}} S(2^{m+1}n+k,5) = 2^{m-1} \alpha_{5},
\end{equation}
\noindent
with $\alpha_{5}$ odd. Define 
\begin{equation}
X := 2^{-m+1}S(2^{m+1}n+k+2^{m},5) \text{ and }
Y := 2^{-m+1}S(2^{m+1}n+k,5). 
\end{equation}
\noindent
Then $X$ and $Y$ are integers and $X - Y \equiv 1 \bmod 2$, so that  they 
have opposite parity. If $X$ is even and $Y$ is odd, we obtain
\begin{equation}
\nu_{2} \left( S(2^{m+1}n+k+2^{m},5) \right) > m-1 \text{ and } 
\nu_{2} \left( S(2^{m+1}n+k,5) \right) =  m-1. 
\end{equation}
\noindent
The case $X$ odd and $Y$ even is similar. This completes the proof.
\end{proof}

\medskip

There are four classes at the first level corresponding to the residues 
modulo $4$, two of which are constant. The complete determination of the 
valuation $\nu_{2}(S(n,5))$ is now determined by the choice of class 
when we move from level $m$ to $m+1$. We consider only the branch starting 
at indices congruent to $0$ modulo $4$; the case of $3$ modulo $4$ is 
similar. Now there is 
single class per level that we write as
\begin{equation}
C_{m,j} = \{ q_{2^{m}i+j}: \, i \in \mathbb{N} \},
\label{surv-class1}
\end{equation}
\noindent
where $j= j(m)$ is the index that corresponds to the non-constant class at the
$m$-level. The first few examples are listed below.
\begin{eqnarray}
C_{2,4} & = & \{ q_{4i+4}: \, i \in \mathbb{N} \} \nonumber \\
C_{3,4} & = & \{ q_{8i+4}: \, i \in \mathbb{N} \} \nonumber \\
C_{4,12} & = & \{ q_{16i-4}: \, i \in \mathbb{N} \} \nonumber \\
C_{5,28} & = & \{ q_{32i-4}: \, i \in \mathbb{N} \} \nonumber \\
C_{6,28} & = & \{ q_{64i-36}: \, i \in \mathbb{N} \} \nonumber \\
C_{7,156} & = & \{ q_{128i-100}: \, i \in \mathbb{N} \} \nonumber \\
C_{8,156} & = & \{ q_{256i-100}: \, i \in \mathbb{N} \} \nonumber \\
C_{9,156} & = & \{ q_{512i-356}: \, i \in \mathbb{N} \} \nonumber \\
C_{10,156} & = & \{ q_{1024i-868}: \, i \in \mathbb{N} \} \nonumber 
\end{eqnarray}
\noindent

We have observed a connection between the indices $j(m)$ and the set of 
exceptional indices $I_{1}$ in (\ref{seti1}).  

\begin{conjecture}
Construct a list of numbers $\{ c_{i}: \, i \in \mathbb{N} \}$ according to 
the following rule: let $c_{1} = 8$ (the first index in the class $C_{2,4}$),
and then define $c_{j}$ as the first value on $C_{m,j}$ that is strictly 
bigger than $c_{j-1}$. The set $C$ begins as
\begin{equation}
C = \{ 8, \, 12, \, 28, \, 60, \, 92, \, 156, \, 412, \, 668, \, 1180, 
\ldots \}.
\end{equation}
\noindent
Then, starting at $156$, the number $c_{i} \in I_{1}$.
\end{conjecture}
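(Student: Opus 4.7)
The plan is to analyze the recursive structure of the non-constant classes $C_{m,j(m)}$ produced by Theorem~\ref{conj-5}, since by construction each $c_i$ is the smallest representative of such a surviving class that is strictly larger than $c_{i-1}$. From the data one sees $c_i=156+2^{i+1}$ for $i\geq 7$ (with $c_6=156=4\cdot 39$), so all $c_i$ with $i\geq 6$ satisfy $c_i\equiv 28\pmod{128}$. Interpreting the statement as $c_i/4\in I_1$ (equivalently $c_i\equiv 28\pmod{128}$, since $I_1\subset 32\mathbb{Z}+7$), the conjecture reduces to proving that the index $j(m)$ of the surviving non-constant class satisfies $j(m)\equiv 28\pmod{128}$ for every $m\geq 7$.

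First we would make the survivor dichotomy of Theorem~\ref{conj-5} explicit: at level $m\geq 3$ the surviving class $C_{m,j(m)}$ splits into $C_{m+1,j(m)}$ and $C_{m+1,j(m)+2^m}$, and by Proposition~\ref{nu2sum}
\[
S(2^{m+1}n+j(m)+2^m,5)\;-\;5^{2^m}\,S(2^{m+1}n+j(m),5)
\]
has $2$-adic valuation exactly $m-1$, so that the choice $j(m+1)\in\{j(m),\,j(m)+2^m\}$ is governed by the parity of $Y:=2^{-(m-1)}S(2^{m+1}n+j(m),5)$. Iterating Lemma~\ref{recurr-5} and using the explicit formula~(\ref{formulastir}) together with Lemmas~\ref{lemma-a1}--\ref{lemma-a3} (and refinements giving the next $2$-adic digits of $5^{2^{m}}\pm 1$ and $3^{2^{m}}\pm 1$), this parity can be rewritten as a closed-form congruence on $j(m)$ modulo a fixed power of $2$. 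The second step is then an induction on $m$: it suffices to show that for $m\geq 7$ the resulting increment $j(m+1)-j(m)\in\{0,2^m\}$ is always $\equiv 0\pmod{128}$, which freezes $j(m)\bmod 128$ at $28$ and, via the formula $c_i=j(m)+t\cdot 2^m$, delivers the desired congruence for every $c_i$.

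The main obstacle is the inductive step just sketched. The proofs of Propositions~\ref{prop1}, \ref{prop1a}, \ref{prop2} and of Theorem~\ref{conj-5} push the $2$-adic analysis of $S(n,5)$ by only one bit at a time, whereas here seven consecutive binary digits of $j(m)$ must be stabilized simultaneously. This calls for sharpened versions of Lemmas~\ref{lemma-a1}--\ref{lemma-a3} that determine $5^{2^{m}}\pm 1$ and $3^{2^{m}}\pm 1$ modulo $2^{m+7}$, followed by a Hensel-type bookkeeping of the joint cancellations between these expansions and the base-$2$ representation of $j(m)$. As a by-product, the same argument should also settle the companion conjecture that $I_1=\{32j+7:j\geq 1\}$, which is a prerequisite for the statement $c_i\in I_1$ to make literal sense.
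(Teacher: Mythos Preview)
The statement you are attempting to prove is labelled a \emph{Conjecture} in the paper and is left open there: the authors offer no proof, so there is no argument of theirs against which your proposal can be compared.

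Beyond that, your proposal rests on a misidentification of the set $I_{1}$. In the paper, $I_{1}$ is defined in equation~(\ref{seti1}) of Section~\ref{sec-app} via
\[
x_{1}(m) \;=\; 156 + 125\left\lfloor \tfrac{4m}{3}\right\rfloor + 6\left\lfloor \tfrac{2m+1}{3}\right\rfloor,
\]
so that $I_{1}=\{156,\,287,\,412,\,668,\,799,\,924,\,1180,\ldots\}$. This is \emph{not} the exceptional-index set $\{32j+7:j\geq 1\}$ conjectured in Section~\ref{sec-case5}; in particular $I_{1}\not\subset 32\mathbb{Z}+7$ (for instance $156\equiv 28$ and $287\equiv 31\pmod{32}$). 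Consequently your reinterpretation ``$c_{i}/4\in I_{1}$, equivalently $c_{i}\equiv 28\pmod{128}$'' is not equivalent to the conjecture as stated. The claim is literally that the integers $c_{i}$ themselves, for $i\geq 6$, lie in $I_{1}$; since $I_{1}$ contains elements such as $287$ and $799$ which are $\not\equiv 28\pmod{128}$, membership in $I_{1}$ cannot be captured by any single residue class modulo $128$. What is actually required is to show that the sequence $(c_{i})_{i\geq 6}$ coincides with $x_{1}(m)$ along a specific subsequence of $m$'s---a much finer statement than the congruence you reduce to.

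Even granting the misreading, your outline is, as you acknowledge, only a plan: the Hensel-type refinement that would pin down seven binary digits of $j(m)$ simultaneously is not carried out, and the ``companion conjecture'' you propose to settle along the way is itself a separate open conjecture in the paper. So the proposal does not constitute a proof in any case.
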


\section{Some approximations} \label{sec-app} 
\setcounter{equation}{0}

In this section we present some approximations to the function 
$\nu_{2}(S(n,5))$. These approximations were derived empirically, and they
support our belief that $2$-adic valuations of Stirling numbers can be 
well approximated by simple integer combinations of $2$-adic 
valuations of integers.

For each prime $p$, define
\begin{equation}
\lambda_{p}(m) = \frac{1}{2} \left( 1 - (-1)^{m \text{ mod }p} \right).
\end{equation}

\noindent
{\bf First approximation}. Define 
\begin{equation}
f_{1}(m) := \lfloor{ \frac{m+1}{2} \rfloor} + 112 \lambda_{2}(m) + 
50 \lambda_{2}(m+1). 
\end{equation}
\noindent
Then $\nu_{2}(S(m,5))$ and $\nu_{2}(f_{1}(m))$ agree for most values. The first
time they differ is at $m = 156$ where 
\begin{equation}
\nu_{2}(S(156,5)) - \nu_{2}(f_{1}(156)) = 4. \nonumber
\end{equation}
\noindent
The first few indices for which $\nu_{2}(S(m,5)) \neq \nu_{2}(f_{1}(m))$ are
$\{ 156, \, 287, \, 412, \, 668, \, 799, \, \ldots \}$. 

\begin{conjecture}
\label{conj1-app}
Define 
\begin{equation}
x_{1}(m) = 156 + 125 \lfloor{ \frac{4m}{3} \rfloor} + 
6 \lfloor{ \frac{2m+1}{3} \rfloor}
\end{equation}
\noindent
and 
\begin{equation}
I_{1} = \{ x_{1}(m): \, m \geq 0 \}. 
\label{seti1}
\end{equation}
\noindent
Then $\nu_{2}(S(m,5)) = \nu_{2}(f_{1}(m))$ unless $m \in I_{1}$. 
\end{conjecture}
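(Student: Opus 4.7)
The plan is to reduce Conjecture~\ref{conj1-app} to a finite 2-adic verification combined with the splitting structure from Theorem~\ref{conj-5}. Since $\lambda_2(m)$ is the parity indicator, $f_1(m) = (m+100)/2$ for $m$ even and $f_1(m) = (m+225)/2$ for $m$ odd, so $\nu_2(f_1(m))$ equals $\nu_2(m+100) - 1$ or $\nu_2(m+225) - 1$ accordingly. A direct simplification of the formula $x_1(m) = 156 + 125\lfloor 4m/3\rfloor + 6\lfloor (2m+1)/3\rfloor$ splits $I_1$ by parity as $\{m \equiv 156 \pmod{256}\} \cup \{m \equiv 287 \pmod{512}\}$, precisely the set of $m$ with $\nu_2(m+100) \geq 8$ (even branch) or $\nu_2(m+225) \geq 9$ (odd branch). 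The cases $m \equiv 1, 2 \pmod 4$ are then handled instantly: Corollary~\ref{4-vanish} gives $\nu_2(S(m,5)) = 0$, and $\nu_2(f_1(m)) = 0$ by a direct parity check.

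The remaining work reduces to identifying survivor residues. I would prove that for each level $L$ with $3 \leq L \leq 8$, the even-branch survivor class from Theorem~\ref{conj-5} is exactly $\{n \equiv -100 \pmod{2^L}\}$; and for $3 \leq L \leq 9$ the odd-branch survivor is $\{n \equiv -225 \pmod{2^L}\}$. Granting this, an $m \equiv 0 \pmod 4$ with $m \notin I_1$ has $\nu_2(m+100) = k$ for some $k \in \{2,\dots,7\}$; such $m$ lies in the survivor at level $k$ but not at level $k+1$, so it belongs to the constant subclass at level $k+1$ of value $(k+1) - 2 = \nu_2(m+100) - 1 = \nu_2(f_1(m))$, as required, and the odd branch is parallel with $-225$ in place of $-100$.

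To establish the residue identification, I would induct on $L$ using the explicit formula $24\,S(n,5) = 5^{n-1} - 4^n + 2\cdot 3^n - 2^{n+1} + 1$. For $n \geq L+2$, the terms $4^n$ and $2^{n+1}$ vanish modulo $2^{L+3}$, so $24\,S(n,5) \equiv T(n) \pmod{2^{L+3}}$ with $T(n) := 5^{n-1} + 2\cdot 3^n + 1$, and $T(n) \bmod 2^{L+3}$ depends only on $n \bmod 2^{L+1}$, since $3$ and $5$ both have order $2^{L+1}$ in $(\mathbb{Z}/2^{L+3}\mathbb{Z})^\times$. Proposition~\ref{nu2sum} already ensures that exactly one of the two subclasses at each stage is the survivor; what remains is the binary choice, which reduces to checking the single congruence $T(n) \equiv 0 \pmod{2^{L+3}}$ at a representative $n \equiv -100 \pmod{2^{L+1}}$ (and analogously $-225$ for the odd branch) at each level $L = 3, 4, \dots, 8$ (respectively, $3, \dots, 9$). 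Each check is a finite, direct 2-adic computation.

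The main obstacle is that the identification must succeed for eight (respectively nine) consecutive levels without any accidental coincidence. Equivalently, this is an assertion about the first 2-adic digits of the Clarke zeros $u_0, u_1$ of $f_{0,5}(x) = 5 + 10\cdot 3^x + 5^x$ from Section~\ref{sec-case5}: namely $u_0 \equiv -100 \pmod{2^8}$ but $u_0 \not\equiv -100 \pmod{2^9}$, and the parallel statement for $u_1$ modulo $2^9$ and $2^{10}$. Granting Clarke's conjecture~(\ref{clarke-conj}), the result reduces to the explicit computation of these first few 2-adic digits; unconditionally, the verification must be carried out level by level from the recurrence for $S(n,5)$ together with the closed form for $T$, which is tractable but constitutes the bulk of the technical work.
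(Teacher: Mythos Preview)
The paper does not prove this statement: it is stated explicitly as a conjecture in Section~\ref{sec-app} and left open. There is no argument in the paper to compare your proposal against.

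That said, your approach is sound and would prove the conjecture. The simplification of $f_{1}(m)$ to $(m+100)/2$ for $m$ even and $(m+225)/2$ for $m$ odd is correct, and your description of $I_{1}$ as the union of the residue classes $156 \pmod{256}$ and $287 \pmod{512}$ follows from a three-case computation of $x_{1}(3q), x_{1}(3q+1), x_{1}(3q+2)$, which give $156+512q$, $287+512q$, and $412+512q$ respectively. The central reduction --- that for $m \notin I_{1}$ with $m \equiv 0 \pmod 4$ and $\nu_{2}(m+100)=k\in\{2,\dots,7\}$, the index $m$ falls into the constant subclass at level $k+1$, whence $\nu_{2}(S(m,5))=(k+1)-2=\nu_{2}(f_{1}(m))$ by Theorem~\ref{conj-5} --- is exactly right, and the odd branch is parallel with $-225$ and $k\le 8$.

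What remains is the finite identification of survivor residues at levels $3$ through $8$ (even branch) and $3$ through $9$ (odd branch). For the even branch the paper already tabulates the survivor classes through level $10$ at the end of Section~\ref{sec-conjecture}, and one reads off directly that the survivor at level $L$ is $n\equiv -100 \pmod{2^{L}}$ for $3\le L\le 8$, while at level $9$ the survivor is $n\equiv 156\pmod{512}$ rather than $412\equiv -100$, consistent with your claim that the pattern breaks there. For the odd branch only levels $3$ and $4$ ($C_{3,7}$ and $C_{4,15}$, matching $-225\bmod 8$ and $-225\bmod 16$) appear in the paper, so the remaining congruences $T(n)\equiv 0\pmod{2^{L+3}}$ at $n\equiv -225\pmod{2^{L+1}}$ for $L=5,\dots,9$ must be written out explicitly; each is a single evaluation of $5^{n-1}+2\cdot 3^{n}+1$ modulo a fixed power of $2$ and is entirely routine. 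Your observation linking these residues to the initial $2$-adic digits of the Clarke zeros $u_{0},u_{1}$ of $f_{0,5}$ is apt and explains conceptually why $f_{1}$ tracks $\nu_{2}(S(m,5))$ precisely up to these levels and no further.
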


The parity of the exceptions in $I_{1}$ 
is easy to establish: every third element is odd and the even indices of 
$I_{1}$ are on the arithmetic progression $256m+156$. 

\vskip 0.1in

\noindent
{\bf Second approximation}. We now consider the error 
\begin{equation}
Err_{1}(m,5) := \nu_{2}(S(m,5)) - \nu_{2}(f_{1}(m)). 
\end{equation}
\noindent
Observe that $Err_{1}(m,5) = 0$ for $m$ outside $I_{1}$.

\vskip 0.1in

Define 
\begin{eqnarray}
m_{3}(m) & := & (m+2) \, \bmod 3, \nonumber \\
\alpha_{m} & := & \lambda_{3}(m+2) \left( 1 + \lambda_{3}(m) \right) + 
\lambda_{2}(m+1) \lambda_{3}(m), \nonumber 
\end{eqnarray}
\noindent
and
\begin{equation}
f_{2}(m) = \binom{2m_{3}}{m_{3}} \lfloor{ \frac{m+2}{3} \rfloor} + 
208 \lambda_{3}(m+1) + 27 \lambda_{2}(m)\lambda_{3}(m). 
\end{equation}

The next conjecture improves the prediction of Conjecture \ref{conj1-app}. 

\begin{conjecture}
Consider the set 
$I_{2} = \{ x_{2}(m): \, m \geq 0 \}$, where 
\begin{equation}
x_{2}(m) = 109 + 
107 \lfloor{\frac{4m+2}{3} \rfloor} + 85 \lfloor{ \frac{4m+1}{3} \rfloor}. 
\end{equation}
Then
\begin{equation}
\text{Err}_{1}(x_{1}(m),5) =  (-1)^{\alpha_{m}} \nu_{2}(f_{2}(m)),
\end{equation}
\noindent
unless $m \in I_{2}$. 
\end{conjecture}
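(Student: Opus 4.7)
The strategy is to reduce the identity to a finite modular verification by combining the periodicity of $2$-adic expansions already exploited in Section~\ref{sec-conjecture} with the piecewise-linear structure of $x_{1}(m)$, $x_{2}(m)$, $f_{2}(m)$, and $\alpha_{m}$, all of which depend on $m$ only through $m\bmod 3$ together with a linear growth term.

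First, I would parametrize the elements of $I_{1}$ in closed form by splitting on $m\bmod 3$. Writing $m=3q+r$ with $r\in\{0,1,2\}$ and simplifying the floor terms gives
\[
x_{1}(3q) = 156+512q, \qquad x_{1}(3q+1) = 287+512q, \qquad x_{1}(3q+2) = 412+512q,
\]
so $x_{1}(m)\bmod 2^{s}$ is determined by $(m\bmod 3,\;q\bmod 2^{\max(s-9,0)})$. An analogous expansion of $x_{2}(m)$ yields three linear forms with common increment $768$ in $q$, and $f_{2}(m)$, $\alpha_{m}$ are piecewise-constant modulo~$6$.

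Next, I would compute $\nu_{2}(S(x_{1}(m),5))$ explicitly via
\[
24\,S(n,5) = 5^{n-1} - 4^{n} + 2\cdot 3^{n} - 2^{n+1} + 1,
\]
together with Lemmas~\ref{lemma-a1}--\ref{lemma-a3}, which pin the multiplicative orders of $5$ and $3$ modulo $2^{s+2}$ at exactly $2^{s}$. Consequently, for any fixed $M$ the residue $24\,S(n,5)\bmod 2^{M}$ is a periodic function of $n$ with period dividing $2^{M-2}$. Combining this with the parametrization of Step~1 yields an explicit formula for $\nu_{2}(S(x_{1}(m),5))$ whenever this valuation is bounded by $M-C$ for a small absolute constant $C$. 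Expanding $\nu_{2}(f_{1}(x_{1}(m)))+(-1)^{\alpha_{m}}\nu_{2}(f_{2}(m))$ by the same case analysis, the conjecture reduces to checking the identity on each residue class of $m$ modulo $3\cdot 2^{M-9}$ that is not captured by $I_{2}$.

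The main obstacle is that the formula $(-1)^{\alpha_{m}}\nu_{2}(f_{2}(m))$ is a purely empirical fit, so the reduction above really is a finite tabulation rather than a conceptual argument. More seriously, the existence of the nonempty exception set $I_{2}$ signals that $\nu_{2}(S(x_{1}(m),5))$ is not uniformly bounded in $m$: at the indices in $I_{2}$ the valuation spikes to heights that no fixed $M$ resolves, and the identity fails there. Turning the plan into a theorem therefore requires a uniform bound $\nu_{2}(S(x_{1}(m),5))\leq g(m)$ for $m\notin I_{2}$ compatible with the chosen $M$, which appears to require an inductive tower of refined approximations $(f_{k},I_{k})$ analogous to the main conjecture itself. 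This uniformity is the piece that would need genuinely new input beyond the level machinery of Theorem~\ref{conj-5}.
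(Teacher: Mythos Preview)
The statement you are addressing is labeled a \emph{Conjecture} in the paper and is not proved there; it is offered purely as an empirical observation supporting the authors' belief that $\nu_{2}(S(n,5))$ admits good approximations by elementary $2$-adic expressions. There is therefore no paper proof to compare your proposal against.

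Your plan itself is reasonable as far as it goes, and the arithmetic you give for the parametrization of $x_{1}(m)$ by residues modulo~$3$ is correct. You also correctly identify the essential obstruction: the reduction to a finite table modulo $2^{M}$ only certifies the identity on those $m$ for which $\nu_{2}(S(x_{1}(m),5))$ stays below a fixed bound, and the very presence of the exception set $I_{2}$ records precisely the indices where the valuation escapes any such bound. Your concluding paragraph is the honest one: closing the argument would require either a uniform a~priori bound on $\nu_{2}(S(x_{1}(m),5))$ for $m\notin I_{2}$, or an inductive tower of refinements of the same flavor as the main conjecture of the paper, neither of which is available. This is exactly why the authors leave the statement as a conjecture rather than a theorem; your analysis is consistent with that status and does not constitute a proof.
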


We now present one final improvement. Define 
\begin{equation}
Err_{2}(m,5) := Err_{1}(x_{1}(m),5) - (-1)^{\alpha_{m}} 
\nu_{2}f_{2}(m)). 
\end{equation}
\noindent
Define $\beta(m) = \alpha_{m} + (-1)^{m+1} \lambda_{3}(m)$ and 
\begin{equation}
f_{3}(m) = 4^{1- \lambda_{3}(m)} \lfloor{ \frac{m+2}{3} \rfloor} + 
\lambda_{3}(m) \left( 85 \lambda_{3}(m) + 8 \lambda_{2}(m+1) + 
2 \lambda_{3}(m+1) \right). 
\end{equation}

\begin{conjecture}
$Err_{2}(x_{2}(m),5)$ agrees with $(-1)^{\beta(x_{2}(m))} \, 
\nu_{2}(f_{3}(x_{2}(m)))$ for most values of $m \in \mathbb{N}$. 
\end{conjecture}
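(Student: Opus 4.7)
My proposal follows the template developed in Section \ref{sec-conjecture}: iterate the recurrence $S(n,5) = S(n-1,4) + 5S(n-1,5)$ across a class $C_{m,j}$ so that $S(n,5)$ is expressed, modulo a high power of $2$, by a closed sum of $S(N+j,4)$ times powers of $5$, and then evaluate the $2$-adic valuation of that sum through refined versions of Lemmas \ref{lemma-a1}--\ref{lemma-a3}. The conjecture asserts that this procedure, carried out to one more order of precision than in Proposition \ref{nu2sum}, produces exactly $(-1)^{\beta(x_2(m))} \nu_2(f_3(x_2(m)))$ as the residual along the exceptional subsequence $x_2(m)$.

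The first step is to parse $I_2$. Since $x_2(m) = 109 + 107 \lfloor (4m+2)/3 \rfloor + 85 \lfloor (4m+1)/3 \rfloor$ is piecewise linear in $m \bmod 3$, the set $I_2$ splits into three arithmetic progressions. For each residue class mod $3$ I would identify the level $M$ in the hierarchy of Theorem \ref{conj-5} at which the corresponding progression is absorbed into a single non-constant class $C_{M,j}$. On $C_{M,j}$ we know only $\nu_2(S(n,5)) > M-2$ from Proposition \ref{nu2sum}; the goal is to resolve the next few binary digits in order to match the combinatorial shape of $f_3$, especially the factor $\binom{2m_3}{m_3}$ and the parity piece $(-1)^{\beta}$.

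The key technical step is a quantitative strengthening of Lemmas \ref{lemma-a1}--\ref{lemma-a3}. Rather than the leading orders $\nu_2(5^{2^m}-1) = m+2$, etc., one needs $5^{2^m}-1 \equiv 2^{m+2} u(m) \bmod 2^{m+2+r}$ with $u(m)$ an explicit $2$-adic unit, and similarly for $3^{2^m}-1$ and $5^{2^m}-3^{2^m}$. Such refinements are obtainable by iterating the factorization $a^{2^{m+1}}-1 = (a^{2^m}-1)(a^{2^m}+1)$ while tracking how the odd factor $a^{2^m}+1$ contributes modulo higher powers of $2$. Substituting these back into the explicit expansion
\begin{equation}
6 \sum_{j=0}^{M} 5^{M-j} S(N+j,4) = 4^{N-1}(5^{M+1}-4^{M+1}) + 2^{N-1}(5^{M+1}-2^{M+1}) - \tfrac{1}{2}3^N(5^{M+1}-3^{M+1}) - \tfrac{1}{4}(5^{M+1}-1)
\end{equation}
should, after bookkeeping, produce the signs and floor-function pieces appearing in $f_3$.

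The main obstacle is the phrase \emph{for most values}: the conjecture already presupposes a further exceptional set $I_3$, not characterized in the paper, on which $f_3$ itself fails. Any rigorous argument must therefore isolate $I_3$ as a by-product of the refined congruence analysis, which means extending the whole hierarchy of approximations rather than closing it at level three. Without a structural reason to prefer the particular combinations $\lambda_2(m)$, $\lambda_3(m)$, $4^{1-\lambda_3(m)}$ that appear in $f_3$, one is reduced to ad hoc case analysis modulo large powers of $2$. A more satisfying resolution would follow from Clarke's conjectural identity $\nu_2(S(n,5)) = -1 + \nu_2(n-u_0)$ for even $n$: the $2$-adic expansion of the zero $u_0$ of $f_{0,5}$ would automatically generate a nested sequence of exceptional sets of which $I_1, I_2, I_3, \dots$ are the first entries, and each $f_j$ would emerge as a partial sum of that expansion.
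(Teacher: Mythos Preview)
The statement you are attempting to prove is labeled a \emph{Conjecture} in the paper, and the paper offers no proof of it whatsoever. The entire Section \ref{sec-app} consists of empirically discovered approximations; the authors state explicitly that ``these approximations were derived empirically'' and present three nested conjectures without proof. There is therefore no proof in the paper against which to compare your proposal.

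As to the proposal itself: it is not a proof but a programme, and you are candid about this. The strategy you outline---iterate the recurrence as in Lemma \ref{recurr-5}, sharpen Lemmas \ref{lemma-a1}--\ref{lemma-a3} to higher $2$-adic precision, and read off the residual---is the natural continuation of the method of Section \ref{sec-conjecture}, and is plausibly how the authors arrived at $f_{3}$ in the first place. But two genuine gaps prevent it from being a proof. First, the statement itself is not precise: ``agrees \ldots\ for most values of $m$'' has no mathematical content until the exceptional set is specified, and the paper does not specify it. You recognise this, and correctly note that any rigorous argument would have to produce $I_{3}$ as output. Second, your proposed resolution via Clarke's identity $\nu_{2}(S(n,5)) = -1 + \nu_{2}(n-u_{0})$ rests on Clarke's conjecture (\ref{clarke-conj}), which is itself open; invoking one conjecture to prove another is not a proof.

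In short: the paper contains no proof of this conjecture, and your proposal, while a reasonable sketch of how one might attack it, does not close the gap either.
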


\section{A sample of pictures} \label{sec-pictures} 
\setcounter{equation}{0}

In this section we present data that illustrate the wide variety of behavior 
for the $2$-adic valuation of Stirling numbers $S(n,k)$. Several features are 
common to all. For instance we observe the appearance of an {\em empty region}
from below the graph. The graph of $\nu_{2}(S(n,126))$ shows the basic 
function $\nu_{2}(n)$ in the interior of the graph. The dark objects in 
$\nu_{2}(S(n,195))$ and $\nu_{2}(S(n,260))$ correspond to an oscillation
between two consecutive values. We are lacking an explanation of 
these features.

{{
\begin{figure}[ht]
\subfigure{\includegraphics[width=3in]{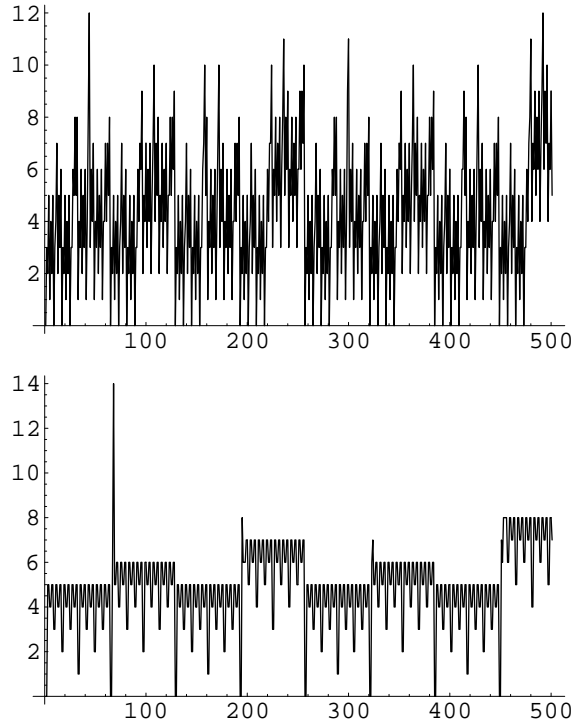}}
\subfigure{\includegraphics[width=3in]{500data126.eps}}
\caption{The data for $S(n,80)$ and $S(n,126)$.}
\label{sample80}
\end{figure}
}}

{{
\begin{figure}[ht]
\subfigure{\includegraphics[width=3in]{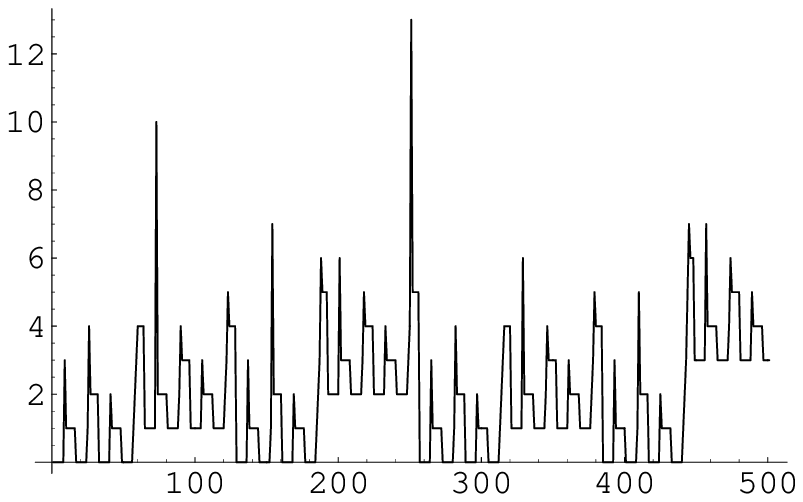}}
\subfigure{\includegraphics[width=3in]{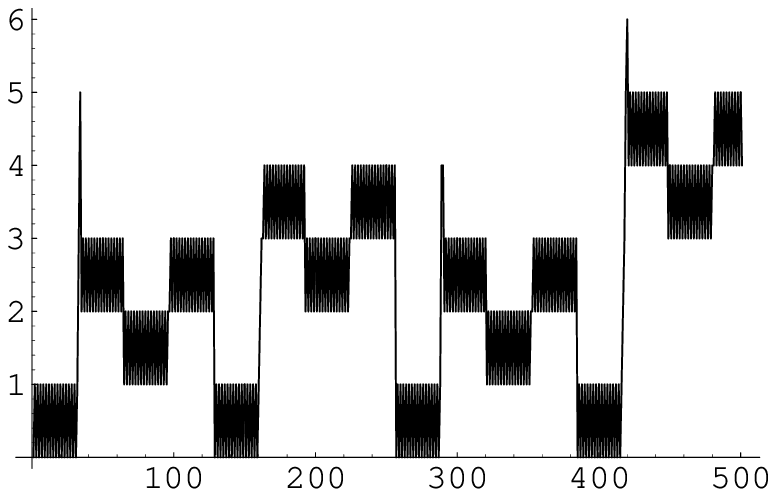}}
\caption{The data for $S(n,146)$ and $S(n,195)$.}
\label{sample146}
\end{figure}
}}

{{
\begin{figure}[ht]
\subfigure{\includegraphics[width=3in]{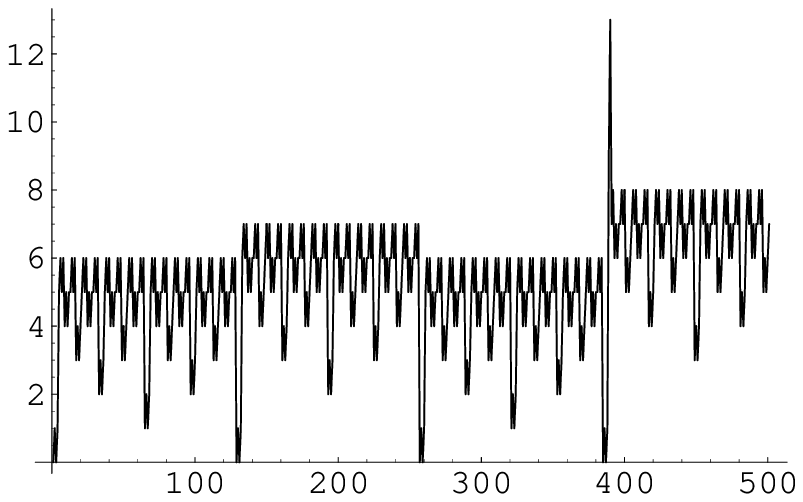}}
\subfigure{\includegraphics[width=3in]{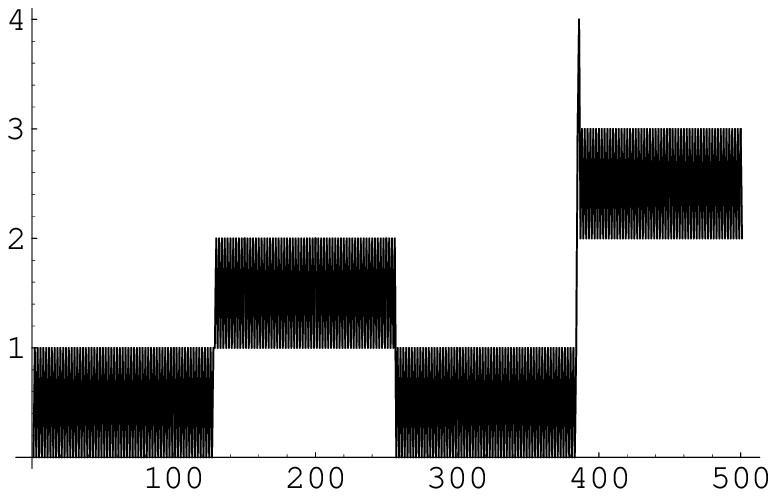}}
\caption{The data for $S(n,252)$ and $S(n,260)$.}
\label{sample252}
\end{figure}
}}

{{
\begin{figure}[ht]
\subfigure{\includegraphics[width=3in]{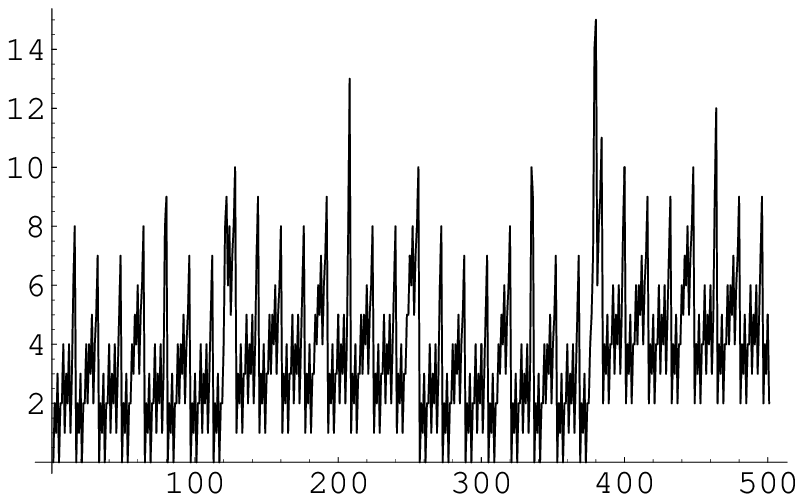}}
\subfigure{\includegraphics[width=3in]{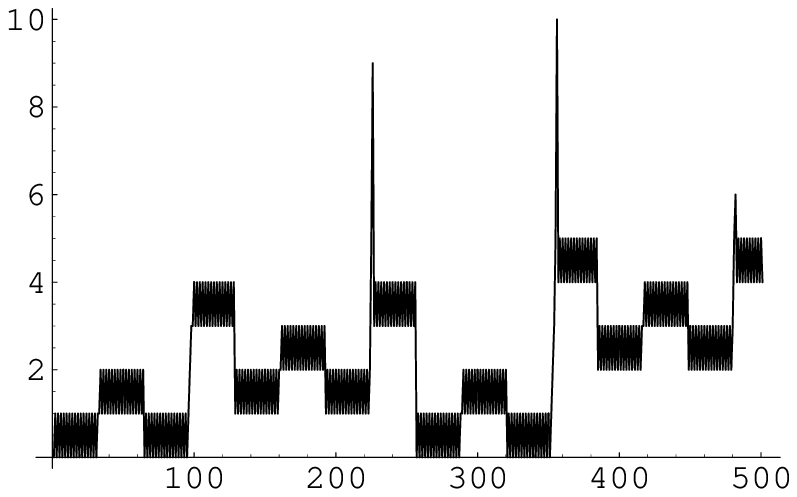}}
\caption{The data for $S(n,279)$ and $S(n,324)$.}
\label{sample279}
\end{figure}
}}

{{
\begin{figure}[ht]
\subfigure{\includegraphics[width=3in]{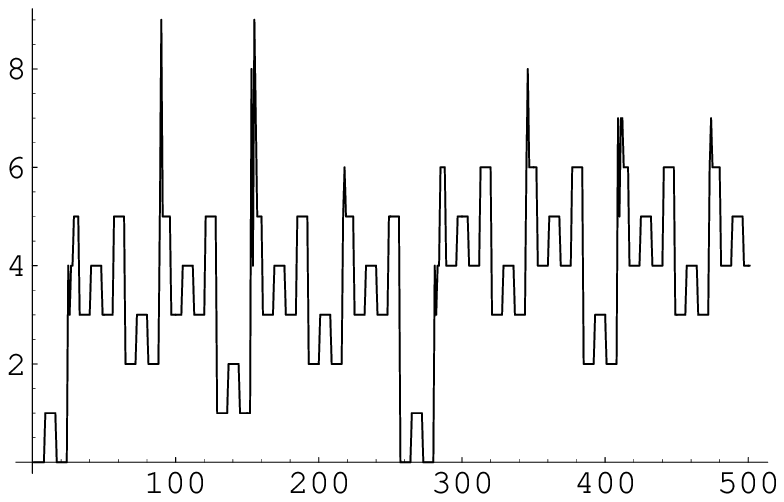}}
\subfigure{\includegraphics[width=3in]{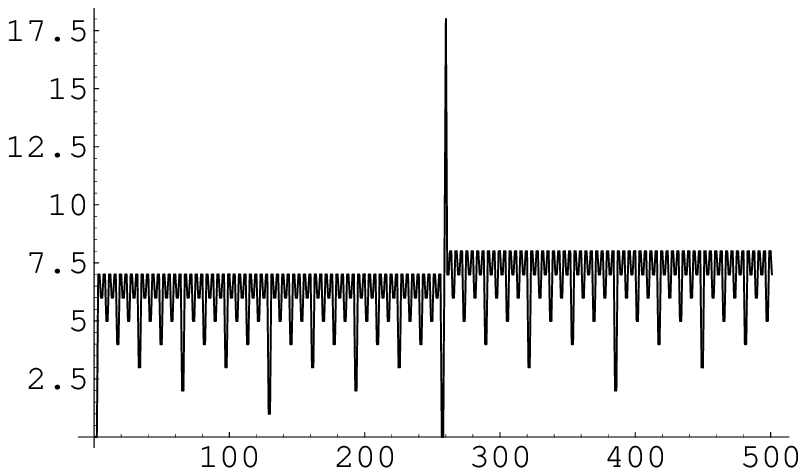}}
\caption{The data for $S(n,465)$ and $S(n,510)$.}
\label{sample465}
\end{figure}
}}

\section{Conclusions} \label{sec-conclusions} 
\setcounter{equation}{0}

We have presented a conjecture that describes the $2$-adic valuation of 
the Stirling numbers $S(n,k)$. This conjecture is established for $k=5$.  \\

{\bf Acknowledgements}. The last author acknowledges the partial support of 
NSF-DMS 0409968. The second author was partially supported as  a graduate 
student by the same grant. The work of the first author was done while 
visiting Tulane University in the Spring of 2006.  The authors wish to thank
Valerio de Angelis for the diagrams in the paper.

\bigskip

\end{document}